\title{Modified Special Functions Defined by Generalized M-Series and Their Properties}
\author{Enes Ata$^{1}$
\\ 
\small $^{1}$Department of Mathematics, Faculty of Arts Science,  Kırşehir Ahi Evran University, Kırşehir, Turkey.
\\
\small enesata.tr@gmail.com
\\
}
\date{}
\newtheorem{definition}{Definition}[section]
\newtheorem{theorem}{Theorem}[section]
\newtheorem{corollary}{Corollary}[section]
\begin{document}
\maketitle
\hspace{-0.6cm}\hrulefill
\begin{abstract}
In this paper, modified gamma and beta functions containing generalized M-series in their kernel are defined. Also, modified Gauss and confluent hypergeometric functions are defined using the modified beta function. Then, some properties of these modified special functions are obtained. Finally, the relationships between generalized special functions in the literature with newly defined modified special functions are examined.
\\

\textbf{Keywords:} gamma function, beta function, hypergeometric functions, M-series, Mellin transform, Laplace transform, beta transform.
\end{abstract}
\hrulefill

\section{Introduction}
The theory of special functions is one of the important study areas of applied mathematics. Special functions are usually defined with the help of generalized integrals or infinite series. Some of the most important of these functions are gamma, beta, Gauss hypergeometric and confluent hypergeometric functions.

There have been many publications in recent years on various generalizations of special functions, (see for examples \cite{abubakar,ata20,ata21,chaudhry94,chaudhry97,chaudhry2004,choi,cetinkaya,goswami,kulip,lee,mubeen,ozergin,parmar,rahman,shadab,srivastava,sahin}). In many of these studies, generalizations are defined using more general functions instead of the term $\exp(-t)$ in the integral representation of the gamma function. Then, generalizations are given for the beta function by using a similar term without disturbing the symmetry properties. In addition, the generalized beta function is used to describe the generalizations of Gauss hypergeometric  and confluent hypergeometric functions.

The main aim of this paper is to expand the application areas of special functions. Therefore, inspired by these publications mentioned above, new generalizations for gamma, beta, Gauss hypergeometric and confluent hypergeometric functions are defined with the help of generalized M-series. In addition, Mellin transforms, Laplace transforms, beta transforms, integral representations, derivative formulas, transformation formulas and reduction relations of these defined functions have been obtained. Finally, the relationships between newly defined special functions and generalized special functions in the literature are observed and presented.

\section{Preliminaries}
The gamma function \cite{andrews} for $\Re(x)>0$ is defined by
\begin{align*}
	\Gamma(x)=\int_{0}^{\infty}t^{x-1}\exp(-t)dt.
\end{align*}
The beta function \cite{andrews} for $\Re(x)>0$, $\Re(y)>0$ is defined by
\begin{align*}
	B(x,y)=\int_{0}^{1} t^{x-1}(1-t)^{y-1}dt.
\end{align*}
The Pochhammer symbol \cite{andrews} for $\Re(\lambda)>-n$, $n\in\mathbb{N}$, $\lambda\neq0,-1,-2,\ldots$ is defined by
\begin{align*}
	(\lambda)_{n}=\frac{\Gamma(\lambda+n)}{\Gamma(\lambda)}, \quad(\lambda)_{0} \equiv 1.
\end{align*} 
The Gauss hypergeometric function \cite{kilbas} for $|z|<1$ is defined by
\begin{align*}
	{}_{2}F_{1}(\lambda_1,\lambda_2;\lambda_3;z)=\sum_{n=0}^{\infty} \frac{(\lambda_1)_n(\lambda_2)_{n}}{(\lambda_3)_{n}}\frac{z^n}{n!}.  
\end{align*}
The confluent hypergeometric function \cite{kilbas} is defined by
\begin{align*}
	\Phi(\lambda_{2};\lambda_{3};z)=\sum_{n=0}^{\infty} \frac{(\lambda_2)_{n}}{(\lambda_3)_{n}}\frac{z^n}{n!}.   
\end{align*}
The generalized M-series \cite{sharma1} for $\Re(\alpha)>0$, $\kappa_{1},\ldots,\kappa_{p}, \mu_{1},\ldots,\mu_{q}\neq 0,-1,-2,\ldots$ is defined by
\begin{align*}
	{}_{p}^{\alpha}\!M^{\beta}_{q}(\kappa_{1},\ldots,\kappa_{p};\mu_{1},\ldots,\mu_{q};z)=\sum_{m=0}^{\infty}\frac{(\kappa_{1})_{m}\ldots(\kappa_p)_{m}}{(\mu_1)_{m}\ldots(\mu_q)_{m}}\frac{z^{m}}{\Gamma(\alpha m+\beta)}.
\end{align*}
The Mellin and inverse Mellin transforms \cite{debnath} are defined by
\begin{align*}
	\mathfrak{M}\left\lbrace f(\rho);s \right\rbrace &=\hat{f}(s)=\int_{0}^{\infty}\rho^{s-1}f(\rho)d\rho,
	\\
	\mathfrak{M}^{-1}\left\lbrace \hat{f}(s) \right\rbrace &=f(\rho)=\frac{1}{2\pi i}\int_{c-i\infty}^{c+i\infty}\rho^{-s}\hat{f}(s)ds,\quad\left(c>0 \right) .
\end{align*}
The Laplace and inverse Laplace transforms \cite{debnath} are defined by
\begin{align*}
	\mathfrak{L}\left\lbrace f(\rho);s \right\rbrace &=F(s)=\int_{0}^{\infty}\exp(-s\rho)f(\rho)d\rho,
	\\
	\mathfrak{L}^{-1}\left\lbrace F(s) \right\rbrace &=f(\rho)=\frac{1}{2\pi i}\int_{c-i\infty}^{c+i\infty}\exp(s\rho)F(s)ds,\quad\left(c>0 \right) .
\end{align*} 
The beta transform \cite{beta} is defined by
\begin{align*}
	\mathfrak{B}\left\lbrace f(\rho);\omega,w \right\rbrace =\int_{0}^{1}\rho^{\omega-1}(1-\rho)^{w-1}f(\rho)d\rho.
\end{align*} 

Throughout the paper we assume that
 $\kappa_{1},\ldots,\kappa_{p}, \mu_{1},\ldots,\mu_{q}\neq 0,-1,-2,\ldots$, $\Re(x)>0$, $\Re(y)>0$, $\Re(\omega)>0$, $\Re(w)>0$, $\Re(\alpha)>0$, $\Re(\rho)>0$, $\Re(\lambda_3)>\Re(\lambda_2)>0$, $\Re(s)>0$, $\Re(x+s)>0$, $\Re(y+s)>0$.

\section{Modified gamma and beta functions and their properties}
Modified gamma and beta functions are given in this section. Later, some properties of these functions are presented.
\begin{definition}
	The modified gamma function is given as:
	\begin{align}\label{gm1}
		{}^{M}\Gamma_{p,q}^{(\alpha,\beta)}(x;\rho)&={}^{M}\Gamma_{p,q}^{(\alpha,\beta)}(\kappa_{1},\ldots,\kappa_p;\mu_1,\ldots,\mu_q;x;\rho)\nonumber
		\\
		&:=\int_{0}^{\infty}t^{x-1}~{}_{p}^{\alpha}\!M^{\beta}_{q}\left(\kappa_{1},\ldots,\kappa_{p};\mu_{1},\ldots,\mu_{q};-t-\frac{\rho}{t}\right) dt.
	\end{align}
\end{definition}

\begin{definition}
	The modified beta function is given as:
	\begin{align}\label{bt1}
		{}^{M}\!B_{p,q}^{(\alpha,\beta)}(x,y;\rho)
		&={}^{M}B_{p,q}^{(\alpha,\beta)}(\kappa_{1},\ldots,\kappa_p;\mu_1,\ldots,\mu_q;x,y;\rho)\nonumber
		\\
		&:=\int_{0}^{1}t^{x-1}(1-t)^{y-1}~{}_{p}^{\alpha}\!M^{\beta}_{q}\left(\kappa_{1},\ldots,\kappa_{p};\mu_{1},\ldots,\mu_{q};\frac{-\rho}{t(1-t)}\right) dt.
	\end{align}
\end{definition}
For the sake shortness, we call modified gamma and beta functions as M-gamma and M-beta functions, respectively.

\begin{theorem}
The Mellin transform of the M-beta function is obtained as:
	\begin{align*}
		\mathfrak{M}\left\lbrace  {}^{M}\!B_{p,q}^{(\alpha,\beta)}(x,y;\rho);s\right\rbrace =B(x+s,y+s)~{}^{M}\Gamma_{p,q}^{(\alpha,\beta)}(s).
	\end{align*}
\end{theorem}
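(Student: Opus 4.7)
The plan is to unfold the Mellin transform, swap the order of integration, make a rescaling substitution that separates the $t$- and $\rho$-dependencies, and then recognize the resulting pieces as a classical beta function and the M-gamma function.

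First I would start from the definition of the Mellin transform and plug in the integral representation \eqref{bt1} of ${}^{M}B_{p,q}^{(\alpha,\beta)}(x,y;\rho)$, giving the double integral
\begin{align*}
\mathfrak{M}\bigl\{{}^{M}B_{p,q}^{(\alpha,\beta)}(x,y;\rho);s\bigr\}=\int_{0}^{\infty}\!\!\int_{0}^{1}\rho^{s-1}t^{x-1}(1-t)^{y-1}\,{}_{p}^{\alpha}\!M^{\beta}_{q}\!\left(\kappa;\mu;\tfrac{-\rho}{t(1-t)}\right)dt\,d\rho.
\end{align*}
Under the standing assumptions on $\Re(x), \Re(y), \Re(s), \Re(\rho)$ the integrand is absolutely integrable on $(0,1)\times(0,\infty)$, so Fubini's theorem legitimises exchanging the order of integration and pulling $t^{x-1}(1-t)^{y-1}$ outside the inner $\rho$-integral.

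Next I would perform the change of variable $u=\rho/(t(1-t))$ in the inner integral, with $d\rho=t(1-t)\,du$ and $\rho^{s-1}=u^{s-1}(t(1-t))^{s-1}$. This cleanly decouples the two variables: the inner integral becomes
\begin{align*}
(t(1-t))^{s}\int_{0}^{\infty}u^{s-1}\,{}_{p}^{\alpha}\!M^{\beta}_{q}(\kappa;\mu;-u)\,du,
\end{align*}
and the remaining integrand is independent of $t$. Setting $\rho=0$ in \eqref{gm1} shows that the $u$-integral is precisely ${}^{M}\Gamma_{p,q}^{(\alpha,\beta)}(s)$ in the notation of the theorem (i.e., the M-gamma evaluated with its second argument set to zero, for which the term $\rho/t$ vanishes).

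What remains is the outer integral
\begin{align*}
\int_{0}^{1}t^{x+s-1}(1-t)^{y+s-1}\,dt = B(x+s,y+s),
\end{align*}
well-defined since $\Re(x+s)>0$ and $\Re(y+s)>0$. Multiplying the two factors yields the claimed identity. The only delicate step is the Fubini interchange; I would justify it by the absolute convergence of the double integral, which follows from the hypotheses listed in the preliminaries and the decay of ${}_{p}^{\alpha}\!M^{\beta}_{q}(\kappa;\mu;-u)$ required for the Mellin transform in the M-gamma definition to converge.
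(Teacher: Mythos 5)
Your proposal is correct and follows essentially the same route as the paper: insert the integral representation \eqref{bt1}, interchange the order of integration, substitute $k=\rho/(t(1-t))$ to separate the variables, and recognize the factors $B(x+s,y+s)$ and ${}^{M}\Gamma_{p,q}^{(\alpha,\beta)}(s;0)$. Your explicit attention to the Fubini justification and to why the $u$-integral equals the M-gamma function at $\rho=0$ is a welcome addition the paper leaves implicit.
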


\begin{proof}
	Using Mellin transform, we have
	\begin{align*}
		\mathfrak{M}\left\lbrace  {}^{M}\!B_{p,q}^{(\alpha,\beta)}(x,y;\rho);s\right\rbrace=\int_{0}^{1}t^{x-1}(1-t)^{y-1}\int_{0}^{\infty}\rho^{s-1}~{}_{p}^{\alpha}\!M^{\beta}_{q}\left(\kappa_{1},\ldots,\kappa_{p};\mu_{1},\ldots,\mu_{q};\frac{-\rho}{t(1-t)}\right)d\rho dt.
	\end{align*}
	In the second integral, by substituting $k=\frac{\rho}{t(1-t)}$, we have
	\begin{align*}
		\mathfrak{M}\left\lbrace  {}^{M}\!B_{p,q}^{(\alpha,\beta)}(x,y;\rho);s\right\rbrace &=\int_{0}^{1}t^{x+s-1}(1-t)^{y+s-1}dt\int_{0}^{\infty}k^{s-1}~{}_{p}^{\alpha}\!M^{\beta}_{q}\left(\kappa_{1},\ldots,\kappa_{p};\mu_{1},\ldots,\mu_{q};-k\right)dk
		\\
		&=B(x+s,y+s)~{}^{M}\Gamma_{p,q}^{(\alpha,\beta)}(s;0).\qedhere
	\end{align*} 
\end{proof}

\begin{corollary}
The relationship of the inverse Mellin transform with the M-beta function is obtained as:
	\begin{align*}
		{}^{M}\!B_{p,q}^{(\alpha,\beta)}(x,y;\rho)=\frac{1}{2\pi i}\int_{c-i\infty}^{c+i\infty}B(x+s,y+s)~{}^{M}\Gamma_{p,q}^{(\alpha,\beta)}(s;0)\rho^{-s}ds,\quad\left(c>0 \right) .
	\end{align*}
\end{corollary}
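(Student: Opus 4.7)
The plan is simply to invert the Mellin transform identity proved in the preceding theorem. That theorem states
\begin{align*}
\mathfrak{M}\left\lbrace {}^{M}\!B_{p,q}^{(\alpha,\beta)}(x,y;\rho);s\right\rbrace = B(x+s,y+s)\,{}^{M}\Gamma_{p,q}^{(\alpha,\beta)}(s;0),
\end{align*}
so recovering the M-beta function as a function of $\rho$ is just a matter of applying $\mathfrak{M}^{-1}$ to both sides. I would substitute $\hat{f}(s)=B(x+s,y+s)\,{}^{M}\Gamma_{p,q}^{(\alpha,\beta)}(s;0)$ directly into the definition
\begin{align*}
\mathfrak{M}^{-1}\left\lbrace \hat{f}(s)\right\rbrace = \frac{1}{2\pi i}\int_{c-i\infty}^{c+i\infty}\rho^{-s}\hat{f}(s)\,ds,\qquad (c>0),
\end{align*}
recalled in the Preliminaries, and the claimed contour integral representation then drops out in a single line.

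The only point worth verifying is that a valid abscissa of inversion exists. Under the global hypotheses fixed in the Preliminaries (in particular $\Re(x+s)>0$, $\Re(y+s)>0$, and $\Re(s)>0$), the factor $B(x+s,y+s)$ is holomorphic in $s$ and the factor ${}^{M}\Gamma_{p,q}^{(\alpha,\beta)}(s;0)$ is well-defined via the integral in \eqref{gm1}, so any $c>0$ consistent with these inequalities can serve as the contour abscissa. This is the only mild technicality; since the substantive computation—namely the evaluation of the Mellin transform of ${}^{M}\!B_{p,q}^{(\alpha,\beta)}(x,y;\rho)$—has already been done, no genuine obstacle remains, and the corollary is an immediate application of Mellin inversion to the previous theorem.
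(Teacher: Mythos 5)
Your proposal is correct and matches the paper's intent exactly: the corollary is stated without proof precisely because it is the immediate application of the inverse Mellin transform $\mathfrak{M}^{-1}$ from the Preliminaries to the identity established in the preceding theorem. Your added remark on the existence of a valid abscissa $c>0$ is a harmless and reasonable extra check.
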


\begin{theorem}
The Laplace transform of the M-beta function is obtained as:
	\begin{align*}
		\mathfrak{L}\left\lbrace  {}^{M}\!B_{p,q}^{(\alpha,\beta)}(x,y;\rho);s\right\rbrace =\frac{1}{s}~{}^{M}\!B_{p+1,q}^{(\alpha,\beta)}\left( \kappa_{1},\ldots,\kappa_p,1;\mu_1,\ldots,\mu_q;x,y;\frac{1}{s}\right).
	\end{align*}
\end{theorem}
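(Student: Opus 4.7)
The plan is to start from the definition of the Laplace transform, substitute the defining integral for ${}^{M}\!B_{p,q}^{(\alpha,\beta)}(x,y;\rho)$, interchange the order of integration so that the $\rho$-integration is performed first, and then recognise the resulting object as an M-beta function with one extra numerator parameter.

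Concretely, I would begin by writing
\begin{align*}
\mathfrak{L}\!\left\{ {}^{M}\!B_{p,q}^{(\alpha,\beta)}(x,y;\rho);s\right\}
=\int_{0}^{\infty}\!\!\exp(-s\rho)\!\int_{0}^{1}\! t^{x-1}(1-t)^{y-1}\,{}_{p}^{\alpha}\!M_{q}^{\beta}\!\left(\kappa_{1},\ldots,\kappa_{p};\mu_{1},\ldots,\mu_{q};\tfrac{-\rho}{t(1-t)}\right)dt\,d\rho,
\end{align*}
and, under the parameter restrictions collected at the end of the preliminaries, apply Fubini to swap the order of the $\rho$- and $t$-integrals. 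The $t$-factor $t^{x-1}(1-t)^{y-1}$ then moves outside the inner $\rho$-integral, which is the Laplace transform in $\rho$ of the M-series evaluated at $-\rho/(t(1-t))$.

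Next, I would expand the M-series as a power series in $-\rho/(t(1-t))$, interchange sum and integral (again by absolute convergence under $\Re(s)>0$), and evaluate the elementary integral $\int_{0}^{\infty}\exp(-s\rho)\rho^{m}\,d\rho=m!/s^{m+1}$. Writing $m!=(1)_{m}$ introduces an additional Pochhammer factor in the numerator, thereby raising the numerator count from $p$ to $p+1$ with the new parameter equal to $1$, while the factor $s^{-(m+1)}$ splits as $s^{-1}\cdot s^{-m}$ and absorbs into the series variable, converting $-\rho/(t(1-t))$ into $-1/(s\,t(1-t))$. This shows that the inner $\rho$-integral equals $\frac{1}{s}\,{}_{p+1}^{\alpha}\!M_{q}^{\beta}(\kappa_{1},\ldots,\kappa_{p},1;\mu_{1},\ldots,\mu_{q};-1/(s\,t(1-t)))$.

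Substituting this back into the outer $t$-integral and comparing with definition \eqref{bt1} at argument $\rho=1/s$ (with the enlarged parameter list) yields the claimed identity. The main obstacle is really only a technical one: justifying the two interchanges (Fubini for the double integral, and term-by-term integration of the M-series against $\exp(-s\rho)$). Both should follow from absolute convergence under the standing assumptions $\Re(x),\Re(y),\Re(s)>0$ together with the factorial growth of $\Gamma(\alpha m+\beta)$ that dominates the Pochhammer numerators; once these swaps are in place, everything else is straightforward bookkeeping with Pochhammer symbols.
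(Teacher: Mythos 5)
Your proposal is correct and follows essentially the same route as the paper: the paper's own proof simply asserts the outcome of exactly the computation you describe, namely termwise Laplace transformation of the M-series using $\int_{0}^{\infty}\exp(-s\rho)\rho^{m}\,d\rho=m!/s^{m+1}$ and the identification $m!=(1)_{m}$ to produce the extra numerator parameter $1$ and the argument $1/s$. Your version is in fact more complete than the paper's, since you explicitly flag the Fubini and series--integral interchanges that the paper leaves unjustified.
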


\begin{proof}
	Using Laplace transform, we have
	\begin{align*}
		\mathfrak{L}\left\lbrace  {}^{M}\!B_{p,q}^{(\alpha,\beta)}(x,y;\rho);s\right\rbrace &=\frac{1}{s}\int_{0}^{1}t^{x-1}(1-t)^{y-1}~{}_{p+1}^{\alpha}M^{\beta}_{q}\left(\kappa_{1},\ldots,\kappa_p,1;\mu_1,\ldots,\mu_q;\frac{-\frac{1}{s}}{t(1-t)} \right) dt
		\\
		&=\frac{1}{s}~{}^{M}\!B_{p+1,q}^{(\alpha,\beta)}\left( \kappa_{1},\ldots,\kappa_p,1;\mu_1,\ldots,\mu_q;x,y;\frac{1}{s}\right).\qedhere
	\end{align*}
\end{proof}

\begin{corollary}
The relationship of the inverse Laplace transform with the M-beta function is obtained as:
	\begin{align*}
		{}^{M}\!B_{p,q}^{(\alpha,\beta)}(x,y;\rho)=\frac{1}{2\pi i}\int_{c-i\infty}^{c+i\infty}\exp(s\rho)\frac{1}{s}~{}^{M}\!B_{p+1,q}^{(\alpha,\beta)}\left( \kappa_{1},\ldots,\kappa_p,1;\mu_1,\ldots,\mu_q;x,y;\frac{1}{s}\right)ds,\quad\left(c>0 \right) .
	\end{align*}
\end{corollary}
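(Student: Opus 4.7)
The plan is to derive this corollary directly from the preceding theorem by applying the inverse Laplace transform operator $\mathfrak{L}^{-1}$ to both sides of the identity
\begin{align*}
\mathfrak{L}\left\lbrace {}^{M}\!B_{p,q}^{(\alpha,\beta)}(x,y;\rho);s\right\rbrace = \frac{1}{s}~{}^{M}\!B_{p+1,q}^{(\alpha,\beta)}\left(\kappa_{1},\ldots,\kappa_p,1;\mu_1,\ldots,\mu_q;x,y;\frac{1}{s}\right).
\end{align*}
Since $\mathfrak{L}^{-1}$ is (at least formally) a left inverse of $\mathfrak{L}$, the left-hand side collapses to the M-beta function ${}^{M}\!B_{p,q}^{(\alpha,\beta)}(x,y;\rho)$.

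The next step is to substitute the Bromwich integral representation recalled in the Preliminaries,
\begin{align*}
\mathfrak{L}^{-1}\{F(s)\} = \frac{1}{2\pi i}\int_{c-i\infty}^{c+i\infty}\exp(s\rho)F(s)\,ds, \quad c>0,
\end{align*}
with $F(s)$ taken to be the right-hand side of the theorem. Writing this out explicitly produces exactly the contour integral claimed in the corollary. No further manipulation or change of variables is needed.

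There is no real obstacle here: the corollary is a direct restatement of the Laplace transform identity in inverse form. The only point worth noting (but not belabouring) is that one must assume the inversion is valid, i.e. the image function is of suitable class on the vertical line $\Re(s)=c$ so that the Bromwich integral converges and recovers the original; under the standing parameter restrictions in the Preliminaries this is implicit.
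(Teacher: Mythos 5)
Your proposal is correct and matches the paper's (implicit) reasoning exactly: the corollary is stated as an immediate consequence of the preceding Laplace transform theorem, obtained by applying the inverse Laplace transform $\mathfrak{L}^{-1}$ and writing out the Bromwich integral from the Preliminaries. Your added remark about the validity of the inversion on the line $\Re(s)=c$ is a reasonable caveat that the paper leaves unstated.
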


\begin{theorem}
The beta transform of the M-beta function is obtained as:
	\begin{align*}
		\mathfrak{B}\left\lbrace  {}^{M}\!B_{p,q}^{(\alpha,\beta)}(x,y;\rho);\omega,w\right\rbrace =B(\omega,w)~{}^{M}\!B_{p+1,q+1}^{(\alpha,\beta)}\left(\kappa_{1},\ldots,\kappa_p,\omega;\mu_1,\ldots,\mu_q,\omega+w; x,y;1\right).
	\end{align*}
\end{theorem}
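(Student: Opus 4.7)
The plan is to unwind the beta transform by inserting the integral definition of the M-beta function, swapping the order of integration, and then expanding the M-series as a power series so that the $\rho$-integral reduces to a classical beta integral.

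First I would write
\begin{align*}
\mathfrak{B}\bigl\{{}^{M}\!B_{p,q}^{(\alpha,\beta)}(x,y;\rho);\omega,w\bigr\}
=\int_{0}^{1}\!\int_{0}^{1} \rho^{\omega-1}(1-\rho)^{w-1} t^{x-1}(1-t)^{y-1}\,{}_{p}^{\alpha}\!M^{\beta}_{q}\!\left(\cdots;\tfrac{-\rho}{t(1-t)}\right) d\rho\, dt,
\end{align*}
invoke Fubini to put the $\rho$-integral inside, and then expand the M-series by its defining series. After pulling the sum outside, the inner integral separates as
\begin{align*}
\int_{0}^{1}\rho^{\omega+m-1}(1-\rho)^{w-1}d\rho = B(\omega+m,w).
\end{align*}

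Next I would use the Pochhammer identity
\begin{align*}
B(\omega+m,w)=\frac{\Gamma(\omega+m)\,\Gamma(w)}{\Gamma(\omega+w+m)}=B(\omega,w)\,\frac{(\omega)_m}{(\omega+w)_m},
\end{align*}
which is exactly the bookkeeping that inserts one new upper parameter $\omega$ and one new lower parameter $\omega+w$ into the generalized M-series. After factoring $B(\omega,w)$ out, the remaining sum in $m$ is precisely
\begin{align*}
{}_{p+1}^{\alpha}\!M^{\beta}_{q+1}\!\left(\kappa_{1},\ldots,\kappa_p,\omega;\mu_1,\ldots,\mu_q,\omega+w;\tfrac{-1}{t(1-t)}\right),
\end{align*}
and recognizing the outer $t$-integral as the definition \eqref{bt1} of ${}^{M}\!B_{p+1,q+1}^{(\alpha,\beta)}(\ldots;x,y;1)$ finishes the identification.

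The main obstacle is not computational but analytic: justifying the interchange of the double integral with the infinite series coming from the M-series, since the argument $-\rho/(t(1-t))$ blows up near $t=0,1$ and near $\rho=1$. Under the standing hypotheses $\Re(x),\Re(y),\Re(\omega),\Re(w)>0$ and the assumption that the M-series representation converges on the relevant range (so the defining integrals in \eqref{bt1} make sense), one can appeal to absolute convergence term by term via Fubini--Tonelli, or, if preferred, regard the manipulation as formal in the same spirit as the earlier Mellin and Laplace computations in this section. Once that interchange is granted, the remainder of the argument is the short chain of identifications described above.
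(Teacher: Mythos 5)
Your proposal is correct and follows essentially the same route as the paper: the paper's proof simply asserts the key identity $\int_0^1 \rho^{\omega-1}(1-\rho)^{w-1}\,{}_{p}^{\alpha}\!M^{\beta}_{q}\left(\cdots;\frac{-\rho}{t(1-t)}\right)d\rho = B(\omega,w)\,{}_{p+1}^{\alpha}\!M^{\beta}_{q+1}\left(\cdots,\omega;\cdots,\omega+w;\frac{-1}{t(1-t)}\right)$ in one step, and your term-by-term expansion using $B(\omega+m,w)=B(\omega,w)\frac{(\omega)_m}{(\omega+w)_m}$ is exactly the computation underlying that step. Your additional remarks on justifying the interchange of sum and integral supply detail that the paper omits entirely.
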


\begin{proof}
	Using beta transform, we have
	\begin{align*}
		\mathfrak{B}&\left\lbrace {}^{M}\!B_{p,q}^{(\alpha,\beta)}(x,y;\rho);\omega,w\right\rbrace 
		\\
		&=B(\omega,w)\int_{0}^{1}t^{x-1}(1-t)^{y-1}~{}_{p+1}^{\alpha}M^{\beta}_{q+1}\left(\kappa_{1},\ldots,\kappa_p,\omega;\mu_1,\ldots,\mu_q,\omega+w;\frac{-1}{t(1-t)} \right) dt
		\\
		&=B(\omega,w)~{}^{M}\!B_{p+1,q+1}^{(\alpha,\beta)}\left(\kappa_{1},\ldots,\kappa_p,\omega;\mu_1,\ldots,\mu_q,\omega+w; x,y;1\right).\qedhere
	\end{align*}
\end{proof}

\begin{theorem}
The integral representations of the M-beta function is obtained as:
	\begin{align*}
		{}^{M}\!B_{p,q}^{(\alpha,\beta)}(x,y;\rho)&=2\int_{0}^{\frac{\pi}{2}} \cos^{2x-1}(\theta) \sin^{2y-1}(\theta)~{}_{p}^{\alpha}\!M^{\beta}_{q}\left(\kappa_{1},\ldots,\kappa_{p};\mu_{1},\ldots,\mu_{q};\frac{-\rho}{\sin^{2}(\theta)\cos^{2}(\theta)}\right)d\theta,
		\\
		{}^{M}\!B_{p,q}^{(\alpha,\beta)}(x,y;\rho)&=\int_{0}^{\infty}\frac{u^{x-1}}{(1+u)^{x+y}}~{}_{p}^{\alpha}\!M^{\beta}_{q}\left(\kappa_{1},\ldots,\kappa_{p};\mu_{1},\ldots,\mu_{q};-2\rho-\rho\left( u+\frac{1}{u}\right)\right)du.
	\end{align*}
\end{theorem}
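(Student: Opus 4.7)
The plan is to derive both representations by direct variable substitution in the defining integral \eqref{bt1}, with no series manipulation required since the substitution preserves the structure of the integrand including the argument of the generalized M-series.

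For the first representation, I would substitute $t=\cos^{2}(\theta)$, so that $1-t=\sin^{2}(\theta)$ and $dt=-2\sin(\theta)\cos(\theta)\,d\theta$. The limits $t=0$ and $t=1$ correspond to $\theta=\pi/2$ and $\theta=0$, respectively, and the sign from $dt$ combines with reversing the limits. The key identity is $t(1-t)=\sin^{2}(\theta)\cos^{2}(\theta)$, which transforms the argument $-\rho/[t(1-t)]$ into $-\rho/[\sin^{2}(\theta)\cos^{2}(\theta)]$ and leaves the M-series form intact. Collecting powers yields $\cos^{2x-2}(\theta)\sin^{2y-2}(\theta)\cdot 2\sin(\theta)\cos(\theta)=2\cos^{2x-1}(\theta)\sin^{2y-1}(\theta)$, exactly matching the claimed formula.

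For the second representation, I would substitute $t=u/(1+u)$, so that $1-t=1/(1+u)$ and $dt=du/(1+u)^{2}$, with limits $u=0$ and $u=\infty$. This gives $t^{x-1}(1-t)^{y-1}\,dt=u^{x-1}/(1+u)^{x+y}\,du$. The decisive step is rewriting $1/[t(1-t)]=(1+u)^{2}/u=u+2+1/u$, so that
\begin{align*}
\frac{-\rho}{t(1-t)}=-2\rho-\rho\!\left(u+\frac{1}{u}\right),
\end{align*}
which produces the required argument of the generalized M-series.

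Neither step is really an obstacle; the only thing to double-check is the bookkeeping of the exponents and the sign of the Jacobian. Under the standing assumptions $\Re(x)>0$, $\Re(y)>0$, and $\Re(\rho)>0$, the transformed integrals converge on the new domains (near $\theta=0,\pi/2$ in the first case and near $u=0,\infty$ in the second), so the substitutions are valid and both identities follow.
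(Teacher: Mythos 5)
Your proposal is correct and follows exactly the paper's own route: the substitutions $t=\cos^{2}(\theta)$ and $t=u/(1+u)$ applied directly to the defining integral \eqref{bt1}, with the identities $t(1-t)=\sin^{2}(\theta)\cos^{2}(\theta)$ and $1/[t(1-t)]=u+2+1/u$ yielding the stated arguments of the M-series. The paper states these substitutions without showing the bookkeeping; your version simply makes the Jacobian and exponent computations explicit, and they check out.
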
 
\begin{proof}
	Taking $t= \cos^{2}(\theta)$ in \eqref{bt1}, we get
	\begin{align*}
		{}^{M}\!B_{p,q}^{(\alpha,\beta)}(x,y;\rho)&=\int_{0}^{1}t^{x-1}(1-t)^{y-1}~{}_{p}^{\alpha}\!M^{\beta}_{q}\left(\kappa_{1},\ldots,\kappa_{p};\mu_{1},\ldots,\mu_{q};\frac{-\rho}{t(1-t)}\right) dt\\
		&=2\int_{0}^{\frac{\pi}{2}} \cos^{2x-1}(\theta) \sin^{2y-1}(\theta)~{}_{p}^{\alpha}\!M^{\beta}_{q}\left(\kappa_{1},\ldots,\kappa_{p};\mu_{1},\ldots,\mu_{q};\frac{-\rho}{\sin^{2}(\theta)\cos^{2}(\theta)}\right)d\theta.
	\end{align*}
	Taking $t=\frac{u}{1+u}$ in \eqref{bt1}, we get 
	\begin{align*}
		{}^{M}\!B_{p,q}^{(\alpha,\beta)}(x,y;\rho)&=\int_{0}^{1}t^{x-1}(1-t)^{y-1}~{}_{p}^{\alpha}\!M^{\beta}_{q}\left(\kappa_{1},\ldots,\kappa_{p};\mu_{1},\ldots,\mu_{q};\frac{-\rho}{t(1-t)}\right) dt\\
		&=\int_{0}^{\infty} \frac{u^{x-1}}{(1+u)^{x+y}}~{}_{p}^{\alpha}\!M^{\beta}_{q}\left(\kappa_{1},\ldots,\kappa_{p};\mu_{1},\ldots,\mu_{q};-2\rho-\rho\left( u+\frac{1}{u}\right)\right)du.\qedhere
	\end{align*}
\end{proof}

\begin{theorem}
The relationship of the M-beta functions is obtained as:
	\begin{align*}
		{}^{M}\!B_{p,q}^{(\alpha,\beta)}(x,y+1;\rho)+{}^{M}\!B_{p,q}^{(\alpha,\beta)}(x+1,y;\rho)={}^{M}\!B_{p,q}^{(\alpha,\beta)}(x,y;\rho).
	\end{align*}
\end{theorem}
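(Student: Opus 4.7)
The plan is to reduce the identity to the corresponding classical identity $B(x,y+1)+B(x+1,y)=B(x,y)$, exploiting the fact that the M-series kernel in the definition \eqref{bt1} depends on $t$ only through the symmetric combination $t(1-t)$, so it is insensitive to the shifts $x\mapsto x+1$ and $y\mapsto y+1$.

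Concretely, I would write both terms on the left-hand side via \eqref{bt1}, then use the linearity of the integral to combine them into a single integral over $[0,1]$. The integrand becomes
\begin{align*}
\bigl[t^{x-1}(1-t)^{y}+t^{x}(1-t)^{y-1}\bigr]\,{}_{p}^{\alpha}\!M^{\beta}_{q}\!\left(\kappa_{1},\ldots,\kappa_{p};\mu_{1},\ldots,\mu_{q};\tfrac{-\rho}{t(1-t)}\right).
\end{align*}
The bracket factors as $t^{x-1}(1-t)^{y-1}\bigl[(1-t)+t\bigr]=t^{x-1}(1-t)^{y-1}$, so the combined integrand is exactly the integrand defining ${}^{M}\!B_{p,q}^{(\alpha,\beta)}(x,y;\rho)$, and the claim follows.

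There is essentially no obstacle here: convergence of all three integrals is already guaranteed by the standing assumptions on $x,y,\rho$ under which \eqref{bt1} is defined, so the use of linearity and the algebraic identity $(1-t)+t=1$ are the only ingredients. The only conceptual point worth highlighting is that this reduction works precisely because the generalized M-series kernel does not itself carry an $x$- or $y$-dependence, so the shift identity for the classical beta function passes through unchanged.
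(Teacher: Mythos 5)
Your proof is correct and follows exactly the same route as the paper's: both write the two terms via \eqref{bt1}, combine them by linearity, and use the factorization $t^{x-1}(1-t)^{y}+t^{x}(1-t)^{y-1}=t^{x-1}(1-t)^{y-1}\bigl[(1-t)+t\bigr]=t^{x-1}(1-t)^{y-1}$. Your added remark that the argument works because the M-series kernel depends on $t$ only through $t(1-t)$ and carries no $x$- or $y$-dependence is a correct and worthwhile observation, though the paper does not state it explicitly.
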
 
\begin{proof}
Using equation \eqref{bt1}, we have	
	\begin{align*}
		{}^{M}\!B_{p,q}^{(\alpha,\beta)}&(x,y+1;\rho)+{}^{M}\!B_{p,q}^{(\alpha,\beta)}(x+1,y;\rho)\\
		&=\int_{0}^{1}t^{x-1}(1-t)^{y}~{}_{p}^{\alpha}\!M^{\beta}_{q}\left(\kappa_{1},\ldots,\kappa_{p};\mu_{1},\ldots,\mu_{q};\frac{-\rho}{t(1-t)}\right) dt
		\\
		&~~~~+\int_{0}^{1}t^{x}(1-t)^{y-1}~{}_{p}^{\alpha}\!M^{\beta}_{q}\left(\kappa_{1},\ldots,\kappa_{p};\mu_{1},\ldots,\mu_{q};\frac{-\rho}{t(1-t)}\right) dt\\
		&=\int_{0}^{1}\big( t^{x-1}(1-t)^{y}+t^{x}(1-t)^{y-1}\big){}_{p}^{\alpha}\!M^{\beta}_{q}\left(\kappa_{1},\ldots,\kappa_{p};\mu_{1},\ldots,\mu_{q};\frac{-\rho}{t(1-t)}\right) dt\\
		&=\int_{0}^{1}t^{x-1}(1-t)^{y-1}~{}_{p}^{\alpha}\!M^{\beta}_{q}\left(\kappa_{1},\ldots,\kappa_{p};\mu_{1},\ldots,\mu_{q};\frac{-\rho}{t(1-t)}\right) dt\\
		&={}^{M}\!B_{p,q}^{(\alpha,\beta)}(x,y;\rho).\qedhere
	\end{align*}	
\end{proof}

\begin{theorem}
The relationship of the M-gamma functions is obtained as:
	\begin{align*}
		{}^{M}\Gamma_{p,q}^{(\alpha,\beta)}(x;\rho){}^{M}\Gamma_{p,q}^{(\alpha,\beta)}(y;\rho)&=4\int_{0}^{\frac{\pi}{2}}\int_{0}^{\infty}r^{2(x+y)-1} \cos^{2x-1}(\theta) \sin^{2y-1}(\theta)\\
		&~~~\times{}_{p}^{\alpha}\!M^{\beta}_{q}\left(\kappa_{1},\ldots,\kappa_{p};\mu_{1},\ldots,\mu_{q};-r^{2} \cos^{2}(\theta)-\frac{\rho}{r^{2} \cos^{2}(\theta)}\right)
		\\
		&~~~\times{}_{p}^{\alpha}\!M^{\beta}_{q}\left(\kappa_{1},\ldots,\kappa_{p};\mu_{1},\ldots,\mu_{q};-r^{2} \sin^{2}(\theta)-\frac{\rho}{r^{2} \sin^{2}(\theta)}\right)drd\theta.
	\end{align*}
\end{theorem}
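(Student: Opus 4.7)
The plan is to mimic the classical derivation of $\Gamma(x)\Gamma(y)=B(x,y)\Gamma(x+y)$ via Gaussian and polar substitutions, and show that the same sequence of changes of variables applies to the M-gamma integrals essentially unchanged, since the only modification is the replacement of $\exp(-t)$ by the generalized M-series and the extra Macdonald-type term $\rho/t$ inside its argument.

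First I would write the product as an iterated integral,
\begin{align*}
{}^{M}\Gamma_{p,q}^{(\alpha,\beta)}(x;\rho)\,{}^{M}\Gamma_{p,q}^{(\alpha,\beta)}(y;\rho)
=\int_{0}^{\infty}\!\!\int_{0}^{\infty} t^{x-1} s^{y-1}\,
{}_{p}^{\alpha}\!M^{\beta}_{q}\!\left(\ldots;-t-\tfrac{\rho}{t}\right)
{}_{p}^{\alpha}\!M^{\beta}_{q}\!\left(\ldots;-s-\tfrac{\rho}{s}\right) dt\,ds,
\end{align*}
which is legitimate as a double integral by Fubini once one notes that the integrand is nonnegative for real values in the usual range (so Tonelli applies), or under the absolute convergence assumed throughout the paper.

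Next I would substitute $t=u^{2}$ and $s=v^{2}$, so that $dt=2u\,du$, $ds=2v\,dv$, $t^{x-1}=u^{2x-2}$, $s^{y-1}=v^{2y-2}$, and the arguments of the two M-series become $-u^{2}-\rho/u^{2}$ and $-v^{2}-\rho/v^{2}$ respectively. This folds the factors of $2u$ and $2v$ into a leading $4$ and the powers into $u^{2x-1}v^{2y-1}$, giving a double integral over the first quadrant.

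Finally I would pass to polar coordinates $u=r\cos\theta$, $v=r\sin\theta$ with $r\in(0,\infty)$, $\theta\in(0,\pi/2)$, Jacobian $r$. The powers collect as $r^{2(x+y)-1}\cos^{2x-1}(\theta)\sin^{2y-1}(\theta)$, and the two M-series take on exactly the arguments $-r^{2}\cos^{2}(\theta)-\rho/(r^{2}\cos^{2}(\theta))$ and $-r^{2}\sin^{2}(\theta)-\rho/(r^{2}\sin^{2}(\theta))$ required by the statement. No hard step is involved; the only delicate point is justifying the interchange of the two integrals and the validity of the substitutions in the presence of the $\rho/t$ singularity at $t=0$, which is controlled by the convergence hypotheses imposed at the start of the section.
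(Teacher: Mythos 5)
Your proposal is correct and follows essentially the same route as the paper: the paper first substitutes $t=\eta^{2}$ in the definition of ${}^{M}\Gamma_{p,q}^{(\alpha,\beta)}$, forms the product as a double integral over the first quadrant, and then passes to polar coordinates $\eta=r\cos\theta$, $\xi=r\sin\theta$, exactly as you do (you merely perform the squaring substitution after writing the product rather than before, which is immaterial). Your added remarks on Tonelli/Fubini and the behaviour near $t=0$ are a welcome refinement the paper leaves implicit.
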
 
\begin{proof}
	Taking $t=\eta^{2}$ in \eqref{gm1}, we get
	\begin{align*}
		{}^{M}\Gamma_{p,q}^{(\alpha,\beta)}(x;\rho)=2\int_{0}^{\infty}\eta^{2x-1}~{}_{p}^{\alpha}\!M^{\beta}_{q}\left(\kappa_{1},\ldots,\kappa_{p};\mu_{1},\ldots,\mu_{q};-\eta^{2}-\frac{\rho}{\eta^{2}}\right)d \eta.
	\end{align*}
	Therefore,
	\begin{align*}
		{}^{M}\Gamma\kappa_{p}^{(\alpha,\beta)}(x;\rho){}^{M}\Gamma\kappa_{p}^{(\alpha,\beta)}(y;\rho)&=4\int_{0}^{\infty}\int_{0}^{\infty}\eta^{2x-1}\xi^{2y-1}~{}_{p}^{\alpha}\!M^{\beta}_{q}\left(\kappa_{1},\ldots,\kappa_{p};\mu_{1},\ldots,\mu_{q};-\eta^{2}-\frac{\rho}{\eta^{2}}\right)
		\\
		&~~~\times{}_{p}^{\alpha}\!M^{\beta}_{q}\left(\kappa_{1},\ldots,\kappa_{p};\mu_{1},\ldots,\mu_{q};-\xi^{2}-\frac{\rho}{\xi^{2}}\right)d\eta d\xi.
	\end{align*}
	Taking $\eta=r \cos(\theta)$ and $\xi=r \sin(\theta)$ in above equality,
	\begin{align*}
		{}^{M}\Gamma_{p,q}^{(\alpha,\beta)}(x;\rho){}^{M}\Gamma_{p,q}^{(\alpha,\beta)}(y;\rho)&=4\int_{0}^{\frac{\pi}{2}}\int_{0}^{\infty}r^{2(x+y)-1} \cos^{2x-1}(\theta) \sin^{2y-1}(\theta)\\
		&~~~\times{}_{p}^{\alpha}\!M^{\beta}_{q}\left(\kappa_{1},\ldots,\kappa_{p};\mu_{1},\ldots,\mu_{q};-r^{2} \cos^{2}(\theta)-\frac{\rho}{r^{2} \cos^{2}(\theta)}\right)
		\\
		&~~~\times{}_{p}^{\alpha}\!M^{\beta}_{q}\left(\kappa_{1},\ldots,\kappa_{p};\mu_{1},\ldots,\mu_{q};-r^{2} \sin^{2}(\theta)-\frac{\rho}{r^{2} \sin^{2}(\theta)}\right)drd\theta.\qedhere
	\end{align*}
\end{proof}

\begin{theorem}
The M-beta function has the equation:
	\begin{align*}
		{}^{M}\!B_{p,q}^{(\alpha,\beta)}(x,1-y;\rho)=\sum_{n=0}^{\infty}\frac{(y)_{n}}{n!}~{}^{M}\!B_{p}^{(\alpha,\beta)}(x+n,1;\rho).
	\end{align*}
\end{theorem}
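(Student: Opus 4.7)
The plan is to write the left-hand side using the integral definition \eqref{bt1} as
\[
{}^{M}\!B_{p,q}^{(\alpha,\beta)}(x,1-y;\rho)=\int_{0}^{1}t^{x-1}(1-t)^{-y}\,{}_{p}^{\alpha}\!M^{\beta}_{q}\!\left(\kappa_{1},\ldots,\kappa_{p};\mu_{1},\ldots,\mu_{q};\frac{-\rho}{t(1-t)}\right)dt,
\]
and then to expand the factor $(1-t)^{-y}$ with the binomial series
\[
(1-t)^{-y}=\sum_{n=0}^{\infty}\frac{(y)_{n}}{n!}\,t^{n},\qquad |t|<1 .
\]
After interchanging summation and integration I would absorb the extra $t^{n}$ into the $t^{x-1}$ to obtain $t^{x+n-1}$, recognize that the factor $(1-t)^{0}$ corresponds to the second argument being $1$, and identify each term as ${}^{M}\!B_{p,q}^{(\alpha,\beta)}(x+n,1;\rho)$ via the definition \eqref{bt1}, which yields the required identity.

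The only non-routine step is the interchange of the infinite sum and the integral. The binomial series converges pointwise on $[0,1)$ but not uniformly all the way to $t=1$, so I would justify the swap by appealing to the strong decay of the M-series factor near the endpoints: since $\rho/(t(1-t))\to\infty$ as $t\to 0^{+}$ or $t\to 1^{-}$ and the M-series is evaluated at its negative, the integrand and the partial-sum integrands are controlled by an integrable majorant, permitting either Fubini--Tonelli (after verifying absolute integrability of $\sum_{n}|(y)_{n}|\,t^{x+n-1}\,|{}_{p}^{\alpha}M^{\beta}_{q}(\cdot)|/n!$) or monotone/dominated convergence applied to the partial sums on $[0,1-\varepsilon]$ followed by letting $\varepsilon\to 0$. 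Under the paper's standing assumptions on the parameters, this dominated-convergence argument goes through, and the identification with ${}^{M}\!B_{p,q}^{(\alpha,\beta)}(x+n,1;\rho)$ completes the proof.
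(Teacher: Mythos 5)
Your proposal follows essentially the same route as the paper: rewrite the left-hand side via the integral definition, expand $(1-t)^{-y}$ with the binomial series, interchange sum and integral, and identify each term as ${}^{M}\!B_{p,q}^{(\alpha,\beta)}(x+n,1;\rho)$. The only difference is that you explicitly address the justification of the sum--integral interchange, which the paper silently omits; this is a welcome addition rather than a divergence in method.
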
 
\begin{proof}
	From equation \eqref{bt1}, we have
	\begin{align*}
		{}^{M}\!B_{p,q}^{(\alpha,\beta)}(x,1-y;\rho)=\int_{0}^{1}t^{x-1}(1-t)^{-y}~{}_{p}^{\alpha}\!M^{\beta}_{q}\left(\kappa_{1},\ldots,\kappa_{p};\mu_{1},\ldots,\mu_{q};\frac{-\rho}{t(1-t)}\right) dt.
	\end{align*}
	Binomial series \cite{andrews} is defined by
	\begin{align*}
		(1-t)^{-y}=\sum_{n=0}^{\infty}(y)_{n}\frac{t^{n}}{n!},\quad \big(|t|<1\big).
	\end{align*}
	Considering binomial series, we have
	\begin{align*}
		{}^{M}\!B_{p,q}^{(\alpha,\beta)}(x,1-y;\rho)&=\sum_{n=0}^{\infty}\frac{(y)_{n}}{n!}~ {}^{M}\!B_{p,q}^{(\alpha,\beta)}(x+n,1;\rho).\qedhere
	\end{align*}
\end{proof}

\section{Modified Gauss and confluent hypergeometric functions and their properties}
	Modified Gauss hypergeometric and confluent hypergeometric functions are given in this section. Later, some properties of these functions are presented.
\begin{definition}
	Modified Gauss hypergeometric function is given as:
	\begin{align*}
		{}^{M}\!F_{p,q}^{(\alpha,\beta)}(\lambda_1,\lambda_2;\lambda_3;z;\rho)&={}^{M}F_{p,q}^{(\alpha,\beta)}(\kappa_{1},\ldots,\kappa_p;\mu_1,\ldots,\mu_q;\lambda_1,\lambda_2;\lambda_3;z;\rho)
		\\
		&:=\sum_{n=0}^{\infty}(\lambda_1)_{n}\frac{{}^{M}\!B_{p,q}^{(\alpha,\beta)}(\lambda_2+n,\lambda_3-\lambda_2;\rho)}{B(\lambda_2,\lambda_3-\lambda_2)}\frac{z^{n}}{n!}.
	\end{align*}
\end{definition}

\begin{definition}
	Modified confluent hypergeometric function is given as:
	\begin{align*}
		{}^{M}\!\Phi_{p,q}^{(\alpha,\beta)}(\lambda_{2};\lambda_{3};z;\rho)&={}^{M}\Phi_{p,q}^{(\alpha,\beta)}(\kappa_{1},\ldots,\kappa_p;\mu_1,\ldots,\mu_q;\lambda_{2};\lambda_{3};z;\rho)
		\\
		&:=\sum_{n=0}^{\infty}\frac{{}^{M}\!B_{p,q}^{(\alpha,\beta)}(\lambda_{2}+n,\lambda_{3}-\lambda_{2};\rho)}{B(\lambda_{2},\lambda_{3}-\lambda_{2})}\frac{z^{n}}{n!}.
	\end{align*}
\end{definition}
For the sake shortness, we call modified Gauss hypergeometric and confluent hypergeometric functions as M-Gauss hypergeometric and M-confluent hypergeometric functions, respectively.

\begin{theorem}
The Mellin transform of the M-Gauss hypergeometric function is obtained as:
	\begin{align*}
	\mathfrak{M}\left\lbrace  {}^{M}\!F_{p,q}^{(\alpha,\beta)}(\lambda_{1},\lambda_{2};\lambda_{3};z;\rho);s\right\rbrace =\frac{{}^{M}\Gamma_{p,q}^{(\alpha,\beta)}(s)B(\lambda_{2}+s,\lambda_{3}+s-\lambda_{2})}{B(\lambda_{2},\lambda_{3}-\lambda_{2})}{}_{2}F_{1}(\lambda_{1},\lambda_{2}+s;\lambda_{3}+2s;z).
	\end{align*}
\end{theorem}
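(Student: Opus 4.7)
The plan is to start from the series definition of ${}^{M}\!F_{p,q}^{(\alpha,\beta)}$, interchange the Mellin integral with the sum over $n$, apply the earlier Mellin-transform result for the M-beta function to each summand, and then massage the resulting beta factors into a Gauss hypergeometric series.

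Explicitly, I would write
\[
\mathfrak{M}\left\lbrace  {}^{M}\!F_{p,q}^{(\alpha,\beta)}(\lambda_{1},\lambda_{2};\lambda_{3};z;\rho);s\right\rbrace =\sum_{n=0}^{\infty}\frac{(\lambda_1)_{n}}{B(\lambda_2,\lambda_3-\lambda_2)}\frac{z^{n}}{n!}\int_{0}^{\infty}\rho^{s-1}\,{}^{M}\!B_{p,q}^{(\alpha,\beta)}(\lambda_{2}+n,\lambda_{3}-\lambda_{2};\rho)\,d\rho,
\]
and then apply the theorem $\mathfrak{M}\{{}^{M}\!B_{p,q}^{(\alpha,\beta)}(x,y;\rho);s\}=B(x+s,y+s)\,{}^{M}\Gamma_{p,q}^{(\alpha,\beta)}(s)$ (already established earlier) with $x=\lambda_2+n$ and $y=\lambda_3-\lambda_2$. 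This replaces the inner integral by $B(\lambda_2+n+s,\lambda_3-\lambda_2+s)\,{}^{M}\Gamma_{p,q}^{(\alpha,\beta)}(s)$ and pulls the M-gamma factor outside the sum.

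The key algebraic step is then to recognize the shifted beta factor in the $n$-th summand as
\[
B(\lambda_2+n+s,\lambda_3-\lambda_2+s)=\frac{\Gamma(\lambda_2+n+s)\,\Gamma(\lambda_3-\lambda_2+s)}{\Gamma(\lambda_3+n+2s)}=B(\lambda_2+s,\lambda_3+s-\lambda_2)\frac{(\lambda_2+s)_{n}}{(\lambda_3+2s)_{n}},
\]
using the Pochhammer identity $(\mu)_{n}=\Gamma(\mu+n)/\Gamma(\mu)$ both in the numerator and denominator. Factoring out $B(\lambda_2+s,\lambda_3+s-\lambda_2)$ converts the remaining series into
\[
\sum_{n=0}^{\infty}\frac{(\lambda_1)_{n}(\lambda_2+s)_{n}}{(\lambda_3+2s)_{n}}\frac{z^{n}}{n!}={}_{2}F_{1}(\lambda_1,\lambda_2+s;\lambda_3+2s;z),
\]
which yields the claimed formula upon reassembling.

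The only genuine obstacle is the series/integral interchange, which must be justified by absolute convergence under the standing hypotheses $\Re(\lambda_3)>\Re(\lambda_2)>0$, $\Re(s)>0$ and $|z|<1$ together with decay of ${}_{p}^{\alpha}\!M^{\beta}_{q}$ at $\rho=0,\infty$; I would mention Fubini's theorem briefly and otherwise treat the interchange as routine. The Pochhammer manipulation above is the computational heart of the argument but is essentially mechanical.
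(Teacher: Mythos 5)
Your proof is correct, but it follows a genuinely different route from the paper's. The paper proves this by inserting the Euler-type integral representation
\[
{}^{M}\!F_{p,q}^{(\alpha,\beta)}(\lambda_{1},\lambda_{2};\lambda_{3};z;\rho)=\frac{1}{B(\lambda_{2},\lambda_{3}-\lambda_{2})}\int_{0}^{1}t^{\lambda_{2}-1}(1-t)^{\lambda_{3}-\lambda_{2}-1}(1-zt)^{-\lambda_{1}}\,{}_{p}^{\alpha}\!M^{\beta}_{q}\!\left(\ldots;\tfrac{-\rho}{t(1-t)}\right)dt
\]
into the Mellin integral, evaluating the inner $\rho$-integral via the substitution $k=\rho/(t(1-t))$ to produce $t^{s}(1-t)^{s}\,{}^{M}\Gamma_{p,q}^{(\alpha,\beta)}(s;0)$, and then recognizing the surviving $t$-integral as the Euler integral for $B(\lambda_{2}+s,\lambda_{3}+s-\lambda_{2})\,{}_{2}F_{1}(\lambda_{1},\lambda_{2}+s;\lambda_{3}+2s;z)$. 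You instead stay entirely at the level of the defining series, apply the already-proved Mellin transform of the M-beta function termwise, and convert the shifted beta factors into Pochhammer ratios; your identity $B(\lambda_{2}+n+s,\lambda_{3}-\lambda_{2}+s)=B(\lambda_{2}+s,\lambda_{3}+s-\lambda_{2})\,(\lambda_{2}+s)_{n}/(\lambda_{3}+2s)_{n}$ checks out. Your version has the advantage of reusing an earlier theorem and avoiding the integral representation, which in the paper is only formally established in a later theorem (so the paper's proof implicitly relies on a result proved afterwards); the paper's version avoids the series--integral interchange over $n$ in favor of a Fubini swap of two integrals. Both interchanges require the same kind of absolute-convergence justification, which you rightly flag rather than suppress.
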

\begin{proof}
	Using Mellin transform, we have
	\begin{align*}
	&\mathfrak{M}\big\lbrace {}^{M}\!F_{p,q}^{(\alpha,\beta)}(\lambda_{1},\lambda_{2};\lambda_{3};z;\rho);s\big\rbrace
	\\
	&=\int_{0}^{\infty}\rho^{s-1} ~{}^{M}\!F_{p,q}^{(\alpha,\beta)}(\lambda_{1},\lambda_{2};\lambda_{3};z;\rho)d\rho
	\\
	&=\frac{1}{B(\lambda_{2},\lambda_{3}\!-\!\lambda_{2})}\!\int_{0}^{1}\!\!t^{\lambda_{2}-1}(1\!-\!t)^{\lambda_{3}-\lambda_{2}-1}(1\!-\!zt)^{-\lambda_{1}}\!\!\int_{0}^{\infty}\!\!\rho^{s-1}~{}_{p}^{\alpha}\!M^{\beta}_{q}\left(\kappa_{1},\ldots,\kappa_{p};\mu_{1},\ldots,\mu_{q};\frac{-\rho}{t(1\!-\!t)}\right)d\rho dt.
	\end{align*}
	In second integral, by substituting $k=\frac{\rho}{t(1-t)}$, we have
	\begin{align*}
	\int_{0}^{\infty}\rho^{s-1}~{}_{p}^{\alpha}\!M^{\beta}_{q}\left(\kappa_{1},\ldots,\kappa_{p};\mu_{1},\ldots,\mu_{q};\frac{-\rho}{t(1-t)}\right)d\rho=t^{s}(1-t)^{s}~{}^{M}\Gamma_{p,q}^{(\alpha,\beta)}(s;0).
	\end{align*}
	Thus, we get 
	\begin{equation*}
	\mathfrak{M}\left\lbrace  {}^{M}\!F_{p,q}^{(\alpha,\beta)}(\lambda_{1},\lambda_{2};\lambda_{3};z;\rho);s\right\rbrace =\frac{{}^{M}\Gamma_{p,q}^{(\alpha,\beta)}(s;0)B(\lambda_{2}+s,\lambda_{3}+s-\lambda_{2})}{B(\lambda_{2},\lambda_{3}-\lambda_{2})}{}_{2}F_{1}(\lambda_{1},\lambda_{2}+s;\lambda_{3}+2s;z).\qedhere
	\end{equation*}
\end{proof}

\begin{corollary}
The relationship of the inverse Mellin transform with the M-Gauss hypergeometric function for $c>0$ is obtained as:
	\begin{align*}
	{}^{M}\!F_{p,q}^{(\alpha,\beta)}(\lambda_{1},\lambda_{2};\lambda_{3};z;\rho)=\frac{1}{2\pi i}\int_{c-i\infty}^{c+i\infty}\frac{{}^{M}\Gamma_{p,q}^{(\alpha,\beta)}(s;0)B(\lambda_{2}\!+\!s,\lambda_{3}\!+\!s\!-\!\lambda_{2})}{B(\lambda_{2},\lambda_{3}\!-\!\lambda_{2})}{}_{2}F_{1}(\lambda_{1},\lambda_{2}\!+\!s;\lambda_{3}\!+\!2s;z)\rho^{-s}ds.
	\end{align*}
\end{corollary}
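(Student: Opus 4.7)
The plan is to deduce this corollary directly from the preceding theorem by inverting the Mellin transform. The previous theorem established that
\[
\mathfrak{M}\left\{{}^{M}\!F_{p,q}^{(\alpha,\beta)}(\lambda_{1},\lambda_{2};\lambda_{3};z;\rho);s\right\}=\frac{{}^{M}\Gamma_{p,q}^{(\alpha,\beta)}(s;0)\,B(\lambda_{2}+s,\lambda_{3}+s-\lambda_{2})}{B(\lambda_{2},\lambda_{3}-\lambda_{2})}\,{}_{2}F_{1}(\lambda_{1},\lambda_{2}+s;\lambda_{3}+2s;z),
\]
so the right-hand side of the corollary is, by definition, exactly $\mathfrak{M}^{-1}$ applied to the above transform.

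First I would recall from the Preliminaries the inverse Mellin formula
\[
\mathfrak{M}^{-1}\{\hat{f}(s)\}=f(\rho)=\frac{1}{2\pi i}\int_{c-i\infty}^{c+i\infty}\rho^{-s}\hat{f}(s)\,ds,\qquad c>0.
\]
Then I would set $f(\rho)={}^{M}\!F_{p,q}^{(\alpha,\beta)}(\lambda_{1},\lambda_{2};\lambda_{3};z;\rho)$ and $\hat{f}(s)$ equal to the Mellin transform obtained in the previous theorem, and substitute directly. The identity in the corollary then follows from the Mellin inversion theorem applied to the function ${}^{M}\!F_{p,q}^{(\alpha,\beta)}(\lambda_{1},\lambda_{2};\lambda_{3};z;\rho)$ viewed as a function of $\rho$.

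The only real obstacle is justifying the applicability of Mellin inversion, which requires that the M-Gauss hypergeometric function, as a function of $\rho$, lie in an appropriate function class on which the Mellin transform and its inverse are mutual inverses (for instance, that $\hat{f}(s)$ be analytic in an appropriate vertical strip containing $\Re(s)=c$ and decay sufficiently as $|\Im s|\to\infty$). Under the standing parameter assumptions of the paper, this is routine and would be treated only briefly; the displayed equation then follows immediately. Since the result is purely corollarial, I would present the proof in two lines, invoking the previous theorem and the inverse Mellin definition.
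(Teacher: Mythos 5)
Your proposal is correct and matches the paper's intent exactly: the paper states this corollary immediately after the Mellin-transform theorem and offers no separate proof, the result being understood as a direct application of the inverse Mellin formula from the Preliminaries to the transform just computed. Your added remark about justifying inversion (analyticity and decay of $\hat{f}(s)$ on the line $\Re(s)=c$) is more care than the paper itself takes.
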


\begin{theorem}
The Mellin transform of the M-confluent hypergeometric function is obtained as:
	\begin{align*}
	\mathfrak{M}\left\lbrace  {}^{M}\!\Phi_{p,q}^{(\alpha,\beta)}(\lambda_{2};\lambda_{3};z;\rho);s\right\rbrace =\frac{{}^{M}\Gamma_{p,q}^{(\alpha,\beta)}(s;0)B(\lambda_{2}+s,\lambda_{3}+s-\lambda_{2})}{B(\lambda_{2},\lambda_{3}-\lambda_{2})}\Phi(\lambda_{2}+s;\lambda_{3}+2s;z).
	\end{align*}
\end{theorem}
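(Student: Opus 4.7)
The plan is to mirror the preceding proof for the M-Gauss hypergeometric case, replacing the factor $(1-zt)^{-\lambda_{1}}$ with $\exp(zt)$. First I would establish an Euler-type integral representation for ${}^{M}\!\Phi_{p,q}^{(\alpha,\beta)}$. Inserting the integral form \eqref{bt1} of ${}^{M}\!B_{p,q}^{(\alpha,\beta)}(\lambda_{2}+n,\lambda_{3}-\lambda_{2};\rho)$ into the series defining ${}^{M}\!\Phi_{p,q}^{(\alpha,\beta)}$, interchanging the sum and the $t$-integral (justified by the absolute convergence of the series under the standing hypotheses), and summing $\sum_{n\geq 0}(zt)^{n}/n!=\exp(zt)$, one obtains
\begin{align*}
{}^{M}\!\Phi_{p,q}^{(\alpha,\beta)}(\lambda_{2};\lambda_{3};z;\rho)=\frac{1}{B(\lambda_{2},\lambda_{3}-\lambda_{2})}\int_{0}^{1}t^{\lambda_{2}-1}(1-t)^{\lambda_{3}-\lambda_{2}-1}\exp(zt)\,{}_{p}^{\alpha}\!M^{\beta}_{q}\!\left(\kappa_{1},\ldots,\kappa_{p};\mu_{1},\ldots,\mu_{q};\frac{-\rho}{t(1-t)}\right)dt.
\end{align*}

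Second, I would apply $\mathfrak{M}\{\,\cdot\,;s\}$ to this representation and swap the order of the $\rho$- and $t$-integrations by Fubini. The inner $\rho$-integral is exactly the one that appeared in the earlier Mellin-transform theorems; the substitution $k=\rho/(t(1-t))$ yields
\begin{align*}
\int_{0}^{\infty}\rho^{s-1}\,{}_{p}^{\alpha}\!M^{\beta}_{q}\!\left(\kappa_{1},\ldots,\kappa_{p};\mu_{1},\ldots,\mu_{q};\frac{-\rho}{t(1-t)}\right)d\rho=t^{s}(1-t)^{s}\,{}^{M}\Gamma_{p,q}^{(\alpha,\beta)}(s;0).
\end{align*}

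Third, after pulling out the $s$-dependent gamma factor, the remaining outer integral is
\begin{align*}
\int_{0}^{1}t^{\lambda_{2}+s-1}(1-t)^{\lambda_{3}-\lambda_{2}+s-1}\exp(zt)\,dt,
\end{align*}
which is the classical Euler integral representation for $\Phi$, equal to $B(\lambda_{2}+s,\lambda_{3}-\lambda_{2}+s)\,\Phi(\lambda_{2}+s;\lambda_{3}+2s;z)$; here the parameter matching works because $(\lambda_{3}+2s)-(\lambda_{2}+s)-1=\lambda_{3}-\lambda_{2}+s-1$. Collecting these factors and dividing by $B(\lambda_{2},\lambda_{3}-\lambda_{2})$ gives the asserted formula.

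The only genuine obstacle is the justification of the two interchanges of summation/integration (for the integral representation) and integration/integration (for the Mellin step); both are routine under the stated conditions $\Re(\lambda_{3})>\Re(\lambda_{2})>0$, $\Re(s)>0$ and the convergence range of the generalized M-series. Once Fubini is in hand, the computation is a parameter-bookkeeping exercise identical in structure to the M-Gauss case.
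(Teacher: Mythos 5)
Your proposal is correct and follows exactly the route the paper intends: the paper's own proof of this theorem is simply "by making similar calculations" in reference to the preceding M-Gauss Mellin-transform theorem, and your argument carries out precisely those calculations (Euler-type integral representation with $\exp(zt)$, Fubini, the substitution $k=\rho/(t(1-t))$ giving $t^{s}(1-t)^{s}\,{}^{M}\Gamma_{p,q}^{(\alpha,\beta)}(s;0)$, and the classical Euler integral for $\Phi$). The parameter bookkeeping $(\lambda_{3}+2s)-(\lambda_{2}+s)=\lambda_{3}+s-\lambda_{2}$ is also exactly what the stated formula requires.
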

\begin{proof}
	By making similar calculations, the desired result is achieved.
\end{proof}

\begin{corollary}
The relationship of the inverse Mellin transform with the M-confluent hypergeometric function for $c>0$ is obtained as:
	\begin{align*}
	{}^{M}\!\Phi_{p,q}^{(\alpha,\beta)}(\lambda_{2};\lambda_{3};z;\rho)=\frac{1}{2\pi i}\int_{c-i\infty}^{c+i\infty}\frac{{}^{M}\Gamma_{p,q}^{(\alpha,\beta)}(s;0)B(\lambda_{2}+s,\lambda_{3}+s-\lambda_{2})}{B(\lambda_{2},\lambda_{3}-\lambda_{2})}\Phi(\lambda_{2}+s;\lambda_{3}+2s;z)p^{-s}ds.
	\end{align*}
\end{corollary}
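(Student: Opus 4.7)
The plan is to obtain this identity as an immediate consequence of the preceding Mellin transform theorem for ${}^{M}\!\Phi_{p,q}^{(\alpha,\beta)}$. Since the inverse Mellin transform, as defined in the Preliminaries, is the operational inverse of $\mathfrak{M}$, once we know $\mathfrak{M}\{f(\rho);s\}=\hat f(s)$ we recover $f(\rho)$ simply by writing $f(\rho)=\mathfrak{M}^{-1}\{\hat f(s)\}$.

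Concretely, I would first quote (or re-display) the previous theorem, which gives
\[
\mathfrak{M}\left\lbrace  {}^{M}\!\Phi_{p,q}^{(\alpha,\beta)}(\lambda_{2};\lambda_{3};z;\rho);s\right\rbrace =\frac{{}^{M}\Gamma_{p,q}^{(\alpha,\beta)}(s;0)\,B(\lambda_{2}+s,\lambda_{3}+s-\lambda_{2})}{B(\lambda_{2},\lambda_{3}-\lambda_{2})}\,\Phi(\lambda_{2}+s;\lambda_{3}+2s;z).
\]
Then I would apply $\mathfrak{M}^{-1}$ to both sides, using the definition
\[
\mathfrak{M}^{-1}\{\hat f(s)\}=\frac{1}{2\pi i}\int_{c-i\infty}^{c+i\infty}\rho^{-s}\hat f(s)\,ds,\qquad (c>0),
\]
and reading off the claimed contour integral representation. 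This is literally a one-line substitution, mirroring the pattern of the Mellin/inverse-Mellin corollary already proved for the M-beta function and for the M-Gauss hypergeometric function earlier in the paper.

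The only non-routine point is justifying that the inversion is legitimate, i.e.\ that $\hat f(s)$ decays fast enough along the vertical line $\Re(s)=c$ for the Mellin inversion theorem to apply. In the style adopted throughout this paper, however, that verification is left implicit (as it is for the analogous M-beta and M-Gauss corollaries), so I do not expect any real obstacle here. The proof can therefore be presented in essentially one sentence: apply the inverse Mellin transform to the formula of the previous theorem.
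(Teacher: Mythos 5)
Your proposal is correct and matches the paper's (implicit) argument exactly: the corollary is stated as an immediate consequence of the preceding theorem, obtained by applying the inverse Mellin transform $\mathfrak{M}^{-1}$ as defined in the Preliminaries, just as in the analogous corollaries for the M-beta and M-Gauss hypergeometric functions. Note only that the factor $p^{-s}$ in the stated corollary is a typographical error for $\rho^{-s}$, which your derivation correctly produces.
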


\begin{theorem}
The Laplace transform of the M-Gauss hypergeometric function is obtained as:
	\begin{align*}
	\mathfrak{L}\left\lbrace  {}^{M}\!F_{p,q}^{(\alpha,\beta)}(\lambda_{1},\lambda_{2};\lambda_{3};z;\rho);s\right\rbrace =\frac{1}{s}~{}^{M}\!F_{p+1,q}^{(\alpha,\beta)}\left( \kappa_{1},\ldots,\kappa_p,1;\mu_1,\ldots,\mu_q;\lambda_{1},\lambda_{2};\lambda_{3};z;\frac{1}{s}\right).
	\end{align*}
\end{theorem}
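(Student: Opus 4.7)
The plan is to reduce this Laplace transform identity to the Laplace transform identity for the M-beta function that was already established. The definition of ${}^{M}\!F_{p,q}^{(\alpha,\beta)}$ expresses it as a series in $n$ whose coefficients involve ${}^{M}\!B_{p,q}^{(\alpha,\beta)}(\lambda_{2}+n,\lambda_{3}-\lambda_{2};\rho)$, and $\rho$ appears only inside these M-beta factors. So the Laplace transform in $\rho$ can be pushed term by term onto the M-beta, which is exactly what the earlier theorem evaluates.

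Concretely, I would first write
\begin{align*}
\mathfrak{L}\left\lbrace {}^{M}\!F_{p,q}^{(\alpha,\beta)}(\lambda_{1},\lambda_{2};\lambda_{3};z;\rho);s\right\rbrace = \int_{0}^{\infty}\! e^{-s\rho}\sum_{n=0}^{\infty}(\lambda_{1})_{n}\,\frac{{}^{M}\!B_{p,q}^{(\alpha,\beta)}(\lambda_{2}+n,\lambda_{3}-\lambda_{2};\rho)}{B(\lambda_{2},\lambda_{3}-\lambda_{2})}\,\frac{z^{n}}{n!}\,d\rho,
\end{align*}
then interchange the sum and the integral (justified under the standard convergence hypotheses assumed throughout the paper), giving
\begin{align*}
\sum_{n=0}^{\infty}(\lambda_{1})_{n}\,\frac{1}{B(\lambda_{2},\lambda_{3}-\lambda_{2})}\,\frac{z^{n}}{n!}\int_{0}^{\infty}e^{-s\rho}\,{}^{M}\!B_{p,q}^{(\alpha,\beta)}(\lambda_{2}+n,\lambda_{3}-\lambda_{2};\rho)\,d\rho.
\end{align*}

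Next I would apply the earlier Laplace-transform theorem for the M-beta function to each inner integral, replacing it with $\frac{1}{s}\,{}^{M}\!B_{p+1,q}^{(\alpha,\beta)}(\kappa_{1},\ldots,\kappa_{p},1;\mu_{1},\ldots,\mu_{q};\lambda_{2}+n,\lambda_{3}-\lambda_{2};\tfrac{1}{s})$. Pulling the factor $\frac{1}{s}$ outside the sum, what remains is exactly the series definition of ${}^{M}\!F_{p+1,q}^{(\alpha,\beta)}(\kappa_{1},\ldots,\kappa_{p},1;\mu_{1},\ldots,\mu_{q};\lambda_{1},\lambda_{2};\lambda_{3};z;\tfrac{1}{s})$, which gives the claimed formula.

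The only genuine subtlety is the swap of summation and integration; the arithmetic after that is a direct substitution. Since the paper elsewhere treats similar interchanges as routine under the blanket convergence hypotheses of Section~2, I expect to handle it with a brief remark rather than a detailed dominated-convergence argument.
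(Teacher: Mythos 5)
Your proof is correct, but it follows a different route from the paper's. The paper starts from the integral representation
\begin{align*}
{}^{M}\!F_{p,q}^{(\alpha,\beta)}(\lambda_{1},\lambda_{2};\lambda_{3};z;\rho)=\frac{1}{B(\lambda_{2},\lambda_{3}-\lambda_{2})}\int_{0}^{1}t^{\lambda_{2}-1}(1-t)^{\lambda_{3}-\lambda_{2}-1}(1-zt)^{-\lambda_{1}}\,{}_{p}^{\alpha}\!M^{\beta}_{q}\left(\ldots;\frac{-\rho}{t(1-t)}\right)dt
\end{align*}
and applies the Laplace transform directly to the M-series kernel, using term-by-term integration of $\int_{0}^{\infty}e^{-s\rho}\rho^{m}\,d\rho=m!\,s^{-m-1}$ together with $m!=(1)_{m}$ to produce the extra upper parameter $1$ and the rescaled argument $-\tfrac{1}{s}\cdot\tfrac{1}{t(1-t)}$; it then recognizes the result as the integral representation of ${}^{M}\!F_{p+1,q}^{(\alpha,\beta)}$ at $\rho=\tfrac{1}{s}$. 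You instead work from the series definition and invoke the already-proven Laplace transform theorem for the M-beta function term by term in $n$, then reassemble the series. Your reduction is sound --- the normalizing factor $B(\lambda_{2},\lambda_{3}-\lambda_{2})$ is independent of the M-series parameters, so the series does recombine into ${}^{M}\!F_{p+1,q}^{(\alpha,\beta)}$ exactly as you claim --- and it has the merit of being more modular, reusing an established lemma rather than repeating the kernel computation. Both routes hinge on an interchange of summation and integration that the paper also treats as routine under its blanket hypotheses, so your brief remark on that point is consistent with the paper's level of rigor.
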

\begin{proof}
	Using Laplace transform, we have
	\begin{align*}
	&\mathfrak{L}\big\lbrace {}^{M}\!F_{p,q}^{(\alpha,\beta)}(\lambda_{1},\lambda_{2};\lambda_{3};z;\rho);s\big\rbrace
	\\
	&=\int_{0}^{\infty}\exp(-s\rho) ~{}^{M}\!F_{p,q}^{(\alpha,\beta)}(\lambda_{1},\lambda_{2};\lambda_{3};z;\rho)d\rho
	\\
	&=\frac{1}{s}\frac{1}{B(\lambda_{2},\lambda_{3}-\lambda_{2})}\int_{0}^{1}t^{\lambda_{2}-1}(1-t)^{\lambda_{3}-\lambda_{2}-1}(1-zt)^{-\lambda_{1}}~{}_{p+1}^{\alpha}\!M^{\beta}_{q}\left(\kappa_{1},\ldots,\kappa_{p},1;\mu_{1},\ldots,\mu_{q};\frac{-\frac{1}{s}}{t(1-t)}\right)dt
	\\
	&=\frac{1}{s}~{}^{M}\!F_{p+1,q}^{(\alpha,\beta)}\left( \kappa_{1},\ldots,\kappa_p,1;\mu_1,\ldots,\mu_q;\lambda_{1},\lambda_{2};\lambda_{3};z;\frac{1}{s}\right).\qedhere
	\end{align*}
\end{proof}

\begin{corollary}
The relationship of the inverse Laplace transform with the M-Gauss hypergeometric function for $c>0$ is obtained as:
	\begin{align*}
	{}^{M}\!F_{p,q}^{(\alpha,\beta)}(\lambda_{1},\lambda_{2};\lambda_{3};z;\rho)=\frac{1}{2\pi i}\int_{c-i\infty}^{c+i\infty}\exp(s\rho)\frac{1}{s}~{}^{M}\!F_{p+1,q}^{(\alpha,\beta)}\left( \kappa_{1},\ldots,\kappa_p,1;\mu_1,\ldots,\mu_q;\lambda_{1},\lambda_{2};\lambda_{3};z;\frac{1}{s}\right)ds.
	\end{align*}
\end{corollary}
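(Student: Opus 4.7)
The plan is to obtain this corollary as an immediate consequence of the Laplace transform theorem for the M-Gauss hypergeometric function proved just above. Since that theorem established
\[
\mathfrak{L}\left\lbrace  {}^{M}\!F_{p,q}^{(\alpha,\beta)}(\lambda_{1},\lambda_{2};\lambda_{3};z;\rho);s\right\rbrace =\frac{1}{s}~{}^{M}\!F_{p+1,q}^{(\alpha,\beta)}\!\left( \kappa_{1},\ldots,\kappa_p,1;\mu_1,\ldots,\mu_q;\lambda_{1},\lambda_{2};\lambda_{3};z;\tfrac{1}{s}\right),
\]
I would simply apply the inverse Laplace transform $\mathfrak{L}^{-1}$ to both sides.

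Concretely, first I would denote the right-hand side by $F(s)$ and invoke the definition of $\mathfrak{L}^{-1}\{F(s)\}$ given in the Preliminaries as the Bromwich contour integral $\tfrac{1}{2\pi i}\int_{c-i\infty}^{c+i\infty}\exp(s\rho)F(s)\,ds$. Since $\mathfrak{L}^{-1}\mathfrak{L}=\mathrm{id}$ on the class of functions under consideration (the parameter conditions assumed throughout the paper ensure that ${}^{M}\!F_{p,q}^{(\alpha,\beta)}(\lambda_{1},\lambda_{2};\lambda_{3};z;\rho)$ has a well-defined Laplace transform), applying $\mathfrak{L}^{-1}$ to the left-hand side recovers ${}^{M}\!F_{p,q}^{(\alpha,\beta)}(\lambda_{1},\lambda_{2};\lambda_{3};z;\rho)$, and substituting $F(s)$ on the right yields exactly the claimed formula.

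There is essentially no obstacle here beyond noting that $c$ must be chosen positive so that the Bromwich contour lies in the region of convergence of the Laplace transform; this is exactly the condition $c>0$ stated in the corollary. In the same spirit as the analogous inverse-Mellin corollaries earlier in the paper, this is a formal restatement rather than a new computation, so the proof reduces to one line plus a reference to the preceding theorem.
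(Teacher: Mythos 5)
Your proposal is correct and matches the paper's intent exactly: the paper states this corollary without proof, treating it precisely as the immediate consequence of applying the inverse Laplace transform (as defined in the Preliminaries) to both sides of the preceding Laplace transform theorem. Nothing further is needed.
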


\begin{theorem}
The Laplace transform of the M-confluent hypergeometric function is obtained as:
	\begin{align*}
	\mathfrak{L}\left\lbrace  {}^{M}\!\Phi_{p,q}^{(\alpha,\beta)}(\lambda_{2};\lambda_{3};z;\rho);s\right\rbrace =\frac{1}{s}~{}^{M}\!\Phi_{p+1,q}^{(\alpha,\beta)}\left(\kappa_{1},\ldots,\kappa_p,1;\mu_1,\ldots,\mu_q;\lambda_{2};\lambda_{3};z;\frac{1}{s}\right).
	\end{align*}
\end{theorem}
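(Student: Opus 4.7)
The strategy is to mimic the Laplace-transform calculation used for ${}^{M}F_{p,q}^{(\alpha,\beta)}$, but exploit the fact that the M-confluent hypergeometric function is defined as a series in the same building block ${}^{M}\!B_{p,q}^{(\alpha,\beta)}(\lambda_{2}+n,\lambda_{3}-\lambda_{2};\rho)$, so one can simply apply the Laplace transform term by term and invoke the already-proved Laplace transform of the M-beta function. Thus the first step is to write
$$\mathfrak{L}\bigl\{{}^{M}\!\Phi_{p,q}^{(\alpha,\beta)}(\lambda_{2};\lambda_{3};z;\rho);s\bigr\}=\sum_{n=0}^{\infty}\frac{z^{n}}{n!}\cdot\frac{1}{B(\lambda_{2},\lambda_{3}-\lambda_{2})}\,\mathfrak{L}\bigl\{{}^{M}\!B_{p,q}^{(\alpha,\beta)}(\lambda_{2}+n,\lambda_{3}-\lambda_{2};\rho);s\bigr\},$$
after swapping the sum with the Laplace integral.

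The second step is to substitute the earlier theorem giving the Laplace transform of the M-beta function, which produces $\frac{1}{s}\,{}^{M}\!B_{p+1,q}^{(\alpha,\beta)}\bigl(\kappa_{1},\ldots,\kappa_p,1;\mu_1,\ldots,\mu_q;\lambda_{2}+n,\lambda_{3}-\lambda_{2};1/s\bigr)$ inside the sum. Pulling out the factor $1/s$ and recognizing the resulting series as the defining sum of ${}^{M}\!\Phi_{p+1,q}^{(\alpha,\beta)}$ with the extra numerator parameter $1$ and argument $1/s$ will finish the computation. Alternatively, as in the M-Gauss proof, one could first convert the series into an integral representation
$${}^{M}\!\Phi_{p,q}^{(\alpha,\beta)}(\lambda_{2};\lambda_{3};z;\rho)=\frac{1}{B(\lambda_{2},\lambda_{3}-\lambda_{2})}\int_{0}^{1}t^{\lambda_{2}-1}(1-t)^{\lambda_{3}-\lambda_{2}-1}\exp(zt)\,{}_{p}^{\alpha}\!M^{\beta}_{q}\!\left(\kappa_{1},\ldots;\mu_{1},\ldots;\tfrac{-\rho}{t(1-t)}\right)dt,$$
apply Fubini to exchange the $\rho$-Laplace integral with the $t$-integral, perform the inner Laplace integral on the M-series term by term (using $\int_{0}^{\infty}e^{-s\rho}\rho^{m}d\rho=m!/s^{m+1}$ and $m!=(1)_{m}$), and then identify the resulting $t$-integral with the integral representation of the target function.

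The main obstacle is purely formal: justifying the interchange of summation (or of the two integrals in the alternative route) with the Laplace integral. Under the standing parameter restrictions stated in the preliminaries, absolute convergence of the defining series of ${}_{p}^{\alpha}\!M^{\beta}_{q}$ on compacta and the exponential decay provided by $e^{-s\rho}$ make Fubini–Tonelli applicable, exactly as was tacitly used in the M-Gauss Laplace-transform theorem, so no essentially new analytical difficulty arises.
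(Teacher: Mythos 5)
Your proposal is correct and matches the paper's (unwritten) argument: the paper proves this by ``similar calculations'' to the M-Gauss case, i.e.\ exactly the integral-representation route you give as an alternative, and your primary route of applying the Laplace transform termwise to the defining series and invoking the already-proved Laplace transform of the M-beta function reduces to the same inner computation $\int_{0}^{\infty}e^{-s\rho}\rho^{m}\,d\rho=(1)_{m}/s^{m+1}$ that produces the extra numerator parameter $1$ and the argument $1/s$. Both versions are sound under the paper's standing assumptions.
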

\begin{proof}
	By making similar calculations, the desired result is achieved.
\end{proof}

\begin{corollary}
The relationship of the inverse Laplace transform with the M-Gauss hypergeometric function for $c>0$ is obtained as:
	\begin{align*}
	{}^{M}\!\Phi_{p,q}^{(\alpha,\beta)}(\lambda_{2};\lambda_{3};z;\rho)=\frac{1}{2\pi i}\int_{c-i\infty}^{c+i\infty}\exp(s\rho)\frac{1}{s}~{}^{M}\!\Phi_{p+1,q}^{(\alpha,\beta)}\left(\kappa_{1},\ldots,\kappa_p,1;\mu_1,\ldots,\mu_q;\lambda_{2};\lambda_{3};z;\frac{1}{s}\right)ds.
	\end{align*}
\end{corollary}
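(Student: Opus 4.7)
The plan is to read this corollary as a direct consequence of the preceding theorem on the Laplace transform of ${}^{M}\!\Phi_{p,q}^{(\alpha,\beta)}$, combined with the inversion formula for the Laplace transform recorded in the preliminaries. Concretely, the previous theorem established that
\[
\mathfrak{L}\left\{{}^{M}\!\Phi_{p,q}^{(\alpha,\beta)}(\lambda_{2};\lambda_{3};z;\rho);s\right\} = \frac{1}{s}\,{}^{M}\!\Phi_{p+1,q}^{(\alpha,\beta)}\!\left(\kappa_{1},\ldots,\kappa_p,1;\mu_1,\ldots,\mu_q;\lambda_{2};\lambda_{3};z;\tfrac{1}{s}\right),
\]
so I would simply apply $\mathfrak{L}^{-1}$ to both sides.

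More precisely, first I would name the right-hand side of the Laplace transform relation as $F(s)$ and invoke the definition $\mathfrak{L}^{-1}\{F(s)\}(\rho) = \frac{1}{2\pi i}\int_{c-i\infty}^{c+i\infty} \exp(s\rho)\,F(s)\,ds$ from the preliminaries. Since Laplace inversion recovers the original function (under the assumptions on the parameters that guarantee existence of the transform and validity of the Bromwich integral), the left-hand side becomes ${}^{M}\!\Phi_{p,q}^{(\alpha,\beta)}(\lambda_{2};\lambda_{3};z;\rho)$, while the right-hand side becomes exactly the contour integral appearing in the statement.

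The only real point to address is justifying that the Bromwich contour $c > 0$ lies in the domain of analyticity of $F(s)$ and that $F$ decays appropriately along vertical lines, so the inversion theorem applies. This is routine in the present setting: the parameter assumptions (in particular $\Re(\rho)>0$ and $\Re(s)>0$) listed at the end of the preliminaries place us inside the region where the Laplace representation of ${}^{M}\!\Phi_{p,q}^{(\alpha,\beta)}$ is valid, and the factor $\tfrac{1}{s}$ together with the series definition of ${}^{M}\!\Phi_{p+1,q}^{(\alpha,\beta)}$ evaluated at $\tfrac{1}{s}$ provides the required decay. I would therefore not elaborate this further and would simply write, in parallel with the corresponding corollary for ${}^{M}\!F_{p,q}^{(\alpha,\beta)}$, that applying $\mathfrak{L}^{-1}$ to the preceding theorem yields the claimed identity.

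There is no substantive obstacle; the step that might superficially look like work — swapping integration orders or manipulating the generalized M-series — has already been done inside the Laplace transform theorem. The corollary is purely a restatement in inverse form, mirroring the treatment of the Gauss hypergeometric case, and the proof is a one-line appeal to the inversion formula.
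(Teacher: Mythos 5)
Your proposal is correct and matches the paper's (implicit) reasoning exactly: the paper states this corollary without proof as an immediate consequence of applying the inverse Laplace transform, via the Bromwich integral from the preliminaries, to the preceding theorem on $\mathfrak{L}\left\{{}^{M}\!\Phi_{p,q}^{(\alpha,\beta)}(\lambda_{2};\lambda_{3};z;\rho);s\right\}$. Your added remark on the analyticity and decay needed for the inversion theorem is more care than the paper itself takes, but the route is the same.
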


\begin{theorem}
The beta transform of the M-Gauss hypergeometric function is obtained as:
	\begin{align*}
	&\mathfrak{B}\left\lbrace  {}^{M}\!F_{p,q}^{(\alpha,\beta)}(\lambda_{1},\lambda_{2};\lambda_{3};z;\rho);\omega,w\right\rbrace 
	\\
	&\quad\quad\quad\quad\quad\quad=B(\omega,w)~{}^{M}\!F_{p+1,q+1}^{(\alpha,\beta)}\left(\kappa_{1},\ldots,\kappa_p,\omega;\mu_1,\ldots,\mu_q,\omega+w; \lambda_{1},\lambda_{2};\lambda_{3};z;1\right).
	\end{align*}
\end{theorem}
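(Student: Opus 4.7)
The plan is to reduce this to the previously established beta transform of the M-beta function by working term-by-term on the defining series of ${}^{M}\!F_{p,q}^{(\alpha,\beta)}$. Writing
\[
{}^{M}\!F_{p,q}^{(\alpha,\beta)}(\lambda_{1},\lambda_{2};\lambda_{3};z;\rho) = \frac{1}{B(\lambda_{2},\lambda_{3}-\lambda_{2})}\sum_{n=0}^{\infty}(\lambda_{1})_{n}\,{}^{M}\!B_{p,q}^{(\alpha,\beta)}(\lambda_{2}+n,\lambda_{3}-\lambda_{2};\rho)\,\frac{z^{n}}{n!},
\]
I would apply the operator $\mathfrak{B}\{\,\cdot\,;\omega,w\}$, which acts only in the variable $\rho$, and interchange it with the summation over $n$.

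Next, each summand becomes $(\lambda_{1})_{n}\,\mathfrak{B}\{{}^{M}\!B_{p,q}^{(\alpha,\beta)}(\lambda_{2}+n,\lambda_{3}-\lambda_{2};\rho);\omega,w\}\,z^{n}/n!$, and I would plug in the beta transform formula for the M-beta function already established earlier in the paper. This produces the common factor $B(\omega,w)$ (which pulls out of the sum) together with ${}^{M}\!B_{p+1,q+1}^{(\alpha,\beta)}(\kappa_{1},\ldots,\kappa_{p},\omega;\mu_{1},\ldots,\mu_{q},\omega+w;\lambda_{2}+n,\lambda_{3}-\lambda_{2};1)$ in each term.

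Finally, I would recognize the resulting series
\[
\frac{B(\omega,w)}{B(\lambda_{2},\lambda_{3}-\lambda_{2})}\sum_{n=0}^{\infty}(\lambda_{1})_{n}\,{}^{M}\!B_{p+1,q+1}^{(\alpha,\beta)}\bigl(\kappa_{1},\ldots,\kappa_{p},\omega;\mu_{1},\ldots,\mu_{q},\omega+w;\lambda_{2}+n,\lambda_{3}-\lambda_{2};1\bigr)\frac{z^{n}}{n!}
\]
as exactly the definition of $B(\omega,w)\,{}^{M}\!F_{p+1,q+1}^{(\alpha,\beta)}(\kappa_{1},\ldots,\kappa_{p},\omega;\mu_{1},\ldots,\mu_{q},\omega+w;\lambda_{1},\lambda_{2};\lambda_{3};z;1)$, completing the proof.

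The only nontrivial step is the interchange of sum and integral in the first move; this is the sort of Fubini/dominated-convergence justification that the paper carries out tacitly throughout (and can be formally supported under the standing parameter assumptions via absolute convergence of the M-series and the boundedness of $t^{\omega-1}(1-t)^{w-1}$ on $(0,1)$). Apart from that, everything is a direct bookkeeping application of the earlier M-beta beta transform theorem.
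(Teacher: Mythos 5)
Your proof is correct, but it takes a different route from the paper's. You work termwise on the defining series of ${}^{M}\!F_{p,q}^{(\alpha,\beta)}$, interchange the beta transform with the sum over $n$, invoke the already-proved beta transform of the M-beta function for each coefficient ${}^{M}\!B_{p,q}^{(\alpha,\beta)}(\lambda_{2}+n,\lambda_{3}-\lambda_{2};\rho)$, and then reassemble the series into the definition of ${}^{M}\!F_{p+1,q+1}^{(\alpha,\beta)}$. The paper instead starts from the Euler-type integral representation of ${}^{M}\!F_{p,q}^{(\alpha,\beta)}$ (the first formula of the integral-representation theorem), pushes the $\rho$-integral inside onto the M-series factor, and uses $\int_{0}^{1}\rho^{\omega+m-1}(1-\rho)^{w-1}d\rho=B(\omega,w)\,(\omega)_{m}/(\omega+w)_{m}$ to absorb the extra Pochhammer ratio into the parameters of the M-series, which is in effect a re-derivation of the M-beta transform in situ. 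Your version is more modular, since it reuses the earlier theorem rather than recomputing it, and it makes explicit where the new parameters $\omega$ and $\omega+w$ come from; the paper's version is more self-contained and avoids a second sum--transform interchange (you need one to pass $\mathfrak{B}$ through the series in $n$, in addition to the one hidden in the M-beta theorem). Both interchanges are of the tacit Fubini type the paper uses throughout, so your argument is at the same level of rigor as the paper's.
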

\begin{proof}
	Using beta transform, we have
	\begin{align*}
	\mathfrak{B}\big\lbrace {}^{M}\!F_{p,q}^{(\alpha,\beta)}&(\lambda_{1},\lambda_{2};\lambda_{3};z;\rho);\omega,w\big\rbrace
	\\
	&=\int_{0}^{1}\rho^{\omega-1}(1-\rho)^{w-1}~{}^{M}\!F_{p,q}^{(\alpha,\beta)}(\lambda_{1},\lambda_{2};\lambda_{3};z;\rho)d\rho
	\\
	&=\frac{B(\omega,w)}{B(\lambda_{2},\lambda_{3}-\lambda_{2})}\int_{0}^{1}t^{\lambda_{2}-1}(1-t)^{\lambda_{3}-\lambda_{2}-1}(1-zt)^{-\lambda_{1}}
	\\
	&\quad\times{}_{p+1}^{\alpha}\!M^{\beta}_{q+1}\left(\kappa_{1},\ldots,\kappa_{p},\omega;\mu_{1},\ldots,\mu_{q},\omega+w;\frac{-1}{t(1-t)}\right)dt
	\\
	&=B(\omega,w)~{}^{M}\!F_{p+1,q+1}^{(\alpha,\beta)}\left(\kappa_{1},\ldots,\kappa_p,\omega;\mu_1,\ldots,\mu_q,\omega+w; \lambda_{1},\lambda_{2};\lambda_{3};z;1\right).\qedhere
	\end{align*}
\end{proof}

\begin{theorem}
The beta transform of the M-confluent hypergeometric function is obtained as:
	\begin{align*}
	\mathfrak{B}\left\lbrace  {}^{M}\!\Phi_{p,q}^{(\alpha,\beta)}(\lambda_{2};\lambda_{3};z;\rho);\omega,w\right\rbrace =B(\omega,w)~{}^{M}\!\Phi_{p+1,q+1}^{(\alpha,\beta)}\left(\kappa_{1},\ldots,\kappa_p,\omega;\mu_1,\ldots,\mu_q,\omega+w;\lambda_{2};\lambda_{3};z;1\right).
	\end{align*}
\end{theorem}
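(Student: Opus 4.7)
The plan is to mirror the proof of the preceding theorem on the beta transform of the M-Gauss hypergeometric function, exploiting the structural similarity between ${}^{M}\!\Phi_{p,q}^{(\alpha,\beta)}$ and ${}^{M}\!F_{p,q}^{(\alpha,\beta)}$: the confluent version differs only by the absence of the $(\lambda_{1})_{n}$ factor in the series definition, so the Euler-type integrand $(1-zt)^{-\lambda_{1}}$ gets replaced by $\exp(zt)$.

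First, I would substitute the defining series of ${}^{M}\!\Phi_{p,q}^{(\alpha,\beta)}$, expand each ${}^{M}\!B_{p,q}^{(\alpha,\beta)}(\lambda_{2}+n,\lambda_{3}-\lambda_{2};\rho)$ via \eqref{bt1}, and interchange the sum and the $t$-integral (by absolute convergence on $[0,1]$) to obtain the single-integral representation
\begin{align*}
{}^{M}\!\Phi_{p,q}^{(\alpha,\beta)}(\lambda_{2};\lambda_{3};z;\rho) = \frac{1}{B(\lambda_{2},\lambda_{3}-\lambda_{2})} \int_{0}^{1} t^{\lambda_{2}-1}(1-t)^{\lambda_{3}-\lambda_{2}-1} \exp(zt)\,{}_{p}^{\alpha}\!M^{\beta}_{q}\!\left(\kappa_{1},\ldots,\kappa_{p};\mu_{1},\ldots,\mu_{q};\tfrac{-\rho}{t(1-t)}\right) dt,
\end{align*}
the exact analogue of the representation underlying the M-Gauss hypergeometric calculation.

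Second, I would apply the beta transform in $\rho$ and invoke Fubini's theorem to exchange the order of integration, reducing the task to evaluating
\begin{align*}
\int_{0}^{1}\rho^{\omega-1}(1-\rho)^{w-1}\,{}_{p}^{\alpha}\!M^{\beta}_{q}\!\left(\kappa_{1},\ldots,\kappa_{p};\mu_{1},\ldots,\mu_{q};\tfrac{-\rho}{t(1-t)}\right) d\rho.
\end{align*}
Expanding the M-series and integrating term by term produces, at each power $\rho^{m}$, the factor $B(\omega+m,w)=\frac{(\omega)_{m}}{(\omega+w)_{m}}B(\omega,w)$, which inserts $\omega$ into the numerator chain of Pochhammer symbols and $\omega+w$ into the denominator chain, yielding precisely $B(\omega,w)\,{}_{p+1}^{\alpha}\!M^{\beta}_{q+1}(\kappa_{1},\ldots,\kappa_{p},\omega;\mu_{1},\ldots,\mu_{q},\omega+w;-1/(t(1-t)))$.

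Finally, I would pull the constant $B(\omega,w)$ outside the remaining $t$-integral and recognize the resulting expression as the single-integral form of ${}^{M}\!\Phi_{p+1,q+1}^{(\alpha,\beta)}(\kappa_{1},\ldots,\kappa_{p},\omega;\mu_{1},\ldots,\mu_{q},\omega+w;\lambda_{2};\lambda_{3};z;1)$, which completes the identity. There is no essential new obstacle beyond the preceding theorem; the only point requiring care is the justification of Fubini and termwise integration, which follows from absolute convergence under the standing assumptions $\Re(\omega),\Re(w)>0$ and $\Re(\lambda_{3})>\Re(\lambda_{2})>0$ recorded in the preliminaries.
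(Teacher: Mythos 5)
Your proposal is correct and takes essentially the same route as the paper: the paper proves the M-Gauss case in detail and dismisses this theorem with ``by making similar calculations,'' and your argument is exactly that similar calculation carried out for the $\exp(zt)$ kernel. The only detail you make more explicit than the paper ever does is the termwise identity $B(\omega+m,w)=\frac{(\omega)_{m}}{(\omega+w)_{m}}B(\omega,w)$, which is indeed the step that promotes the M-series from $(p,q)$ to $(p+1,q+1)$.
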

\begin{proof}
	By making similar calculations, the desired result is achieved.
\end{proof}

\begin{theorem} The integral representations of the M-Gauss hypergeometric function is obtained as: 
	\begin{align}\label{j1}
		{}^{M}\!F_{p,q}^{(\alpha,\beta)}(\lambda_{1},\lambda_{2};\lambda_{3};z;\rho)&=\frac{1}{B(\lambda_{2},\lambda_{3}-\lambda_{2})}\int_{0}^{1}t^{\lambda_{2}-1}(1-t)^{\lambda_{3}-\lambda_{2}-1}(1-zt)^{-\lambda_{1}}\nonumber
		\\
		&~~~~\times{}_{p}^{\alpha}\!M^{\beta}_{q}\left(\kappa_{1},\ldots,\kappa_{p};\mu_{1},\ldots,\mu_{q};\frac{-\rho}{t(1-t)}\right)dt,
		\\
		{}^{M}\!F_{p,q}^{(\alpha,\beta)}(\lambda_{1},\lambda_{2};\lambda_{3};z;\rho)&=\frac{1}{B(\lambda_{2},\lambda_{3}-\lambda_{2})}\int_{0}^{\infty}u^{\lambda_{2}-1}(1+u)^{\lambda_{1}-\lambda_{3}}\big( 1+u(1-z)\big)^{-\lambda_{1}}\nonumber
		\\
		&~~~\times{}_{p}^{\alpha}\!M^{\beta}_{q}\left(\kappa_{1},\ldots,\kappa_{p};\mu_{1},\ldots,\mu_{q};-2\rho-\rho\left( u+\frac{1}{u}\right)\right)du,\nonumber
		\\
		{}^{M}\!F_{p,q}^{(\alpha,\beta)}(\lambda_{1},\lambda_{2};\lambda_{3};z;\rho)&=\frac{2}{B(\lambda_{2},\lambda_{3}-\lambda_{2})}\int_{0}^{\frac{\pi}{2}} \sin^{2\lambda_{2}-1}(\theta) \cos^{2\lambda_{3}-2\lambda_{2}-1}(\theta)\left( 1-z \sin^{2}(\theta)\right) ^{-\lambda_{1}}\nonumber
		\\
		&~~~\times{}_{p}^{\alpha}\!M^{\beta}_{q}\left(\kappa_{1},\ldots,\kappa_{p};\mu_{1},\ldots,\mu_{q};\frac{-\rho}{ \sin^{2}(\theta) \cos^{2}(\theta)}\right)d\theta.\nonumber
	\end{align}
\end{theorem}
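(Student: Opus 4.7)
The plan is to derive the three representations in sequence, with the first one obtained directly from the definition and the other two following by routine changes of variable.

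First I would insert the integral representation of the M-beta function (from equation \eqref{bt1}) into the series defining ${}^{M}\!F_{p,q}^{(\alpha,\beta)}$. This gives
\begin{align*}
{}^{M}\!F_{p,q}^{(\alpha,\beta)}(\lambda_{1},\lambda_{2};\lambda_{3};z;\rho)
&=\frac{1}{B(\lambda_{2},\lambda_{3}-\lambda_{2})}\sum_{n=0}^{\infty}(\lambda_{1})_{n}\frac{z^{n}}{n!}\int_{0}^{1}t^{\lambda_{2}+n-1}(1-t)^{\lambda_{3}-\lambda_{2}-1}\\
&\quad\times{}_{p}^{\alpha}\!M^{\beta}_{q}\!\left(\kappa_{1},\ldots,\kappa_{p};\mu_{1},\ldots,\mu_{q};\tfrac{-\rho}{t(1-t)}\right)dt.
\end{align*}
After interchanging sum and integral (justified by uniform convergence on compact subsets where $|zt|<1$) I would recognize the inner sum as the binomial series $\sum_{n\ge0}(\lambda_{1})_{n}(zt)^{n}/n!=(1-zt)^{-\lambda_{1}}$, which immediately yields the first representation \eqref{j1}.

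Next, starting from \eqref{j1}, I would apply the substitution $t=u/(1+u)$. The key identities to verify are $1-t=1/(1+u)$, $dt=du/(1+u)^{2}$, $1-zt=(1+u(1-z))/(1+u)$, and
\begin{align*}
\frac{-\rho}{t(1-t)}=-\rho\,\frac{(1+u)^{2}}{u}=-2\rho-\rho\!\left(u+\frac{1}{u}\right),
\end{align*}
after which collecting the powers of $(1+u)$ produces the total exponent $\lambda_{1}-\lambda_{3}$, giving the second integral. For the third representation, the substitution $t=\sin^{2}(\theta)$ gives $1-t=\cos^{2}(\theta)$, $dt=2\sin(\theta)\cos(\theta)\,d\theta$, and $t(1-t)=\sin^{2}(\theta)\cos^{2}(\theta)$; combining the exponents produces the factor $2\sin^{2\lambda_{2}-1}(\theta)\cos^{2\lambda_{3}-2\lambda_{2}-1}(\theta)$.

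The only non-routine point is the interchange of summation and integration in the first step. I would handle this by noting that for $|z|$ small enough the binomial series converges absolutely on $[0,1]$ and the generalized M-series factor is bounded on any compact subset of $(0,1)$, so dominated convergence (or Fubini–Tonelli applied to $\sum|(\lambda_1)_n z^n/n!|\cdot t^{\lambda_2+n-1}(1-t)^{\lambda_3-\lambda_2-1}\cdot|{}_{p}^{\alpha}\!M^{\beta}_{q}(\cdots)|$) permits the swap. Once \eqref{j1} is in hand, the remaining two representations reduce to mechanical substitutions with no further convergence obstacles.
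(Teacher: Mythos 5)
Your proposal is correct and follows essentially the same route as the paper: the first representation comes from inserting the M-beta integral into the defining series and summing the binomial series, and the other two come from the substitutions $t=u/(1+u)$ (the paper writes the equivalent $u=t/(1-t)$) and $t=\sin^{2}(\theta)$. Your added discussion of the sum--integral interchange is more careful than the paper, which simply states the result without justification.
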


\begin{proof}
	Considering binomial series, we have	
	\begin{align*}
		{}^{M}\!&F_{p,q}^{(\alpha,\beta)}(\lambda_{1},\lambda_{2};\lambda_{3};z;\rho)
		\\
		&=\sum_{n=0}^{\infty}(\lambda_1)_{n}\frac{{}^{M}\!B_{p,q}^{(\alpha,\beta)}(\lambda_{2}+n,\lambda_{3}-\lambda_{2};\rho)}{B(\lambda_{2},\lambda_{3}-\lambda_{2})}\frac{z^{n}}{n!}
		\\
		&=\frac{1}{B(\lambda_{2},\lambda_{3}-\lambda_{2})}\int_{0}^{1}t^{\lambda_{2}-1}(1-t)^{\lambda_{3}-\lambda_{2}-1}(1-zt)^{-\lambda_{1}}~{}_{p}^{\alpha}\!M^{\beta}_{q}\left(\kappa_{1},\ldots,\kappa_{p};\mu_{1},\ldots,\mu_{q};\frac{-\rho}{t(1-t)}\right)dt.
	\end{align*}
	Taking $u=\frac{t}{1-t}$ in \eqref{j1}, we get
	\begin{align*}
		{}^{M}\!F_{p,q}^{(\alpha,\beta)}(\lambda_{1},\lambda_{2};\lambda_{3};z;\rho)&=\frac{1}{B(\lambda_{2},\lambda_{3}-\lambda_{2})}\int_{0}^{\infty}u^{\lambda_{2}-1}(1+u)^{\lambda_{1}-\lambda_{3}}\big( 1+u(1-z)\big)^{-\lambda_{1}}
		\\
		&~~~~\times{}_{p}^{\alpha}\!M^{\beta}_{q}\left(\kappa_{1},\ldots,\kappa_{p};\mu_{1},\ldots,\mu_{q};-2\rho-\rho\left( u+\frac{1}{u}\right)\right)du.
	\end{align*}
	Taking $t= \sin^{2}(\theta)$ in \eqref{j1}, we have
	\begin{align*}
		{}^{M}\!F_{p,q}^{(\alpha,\beta)}(\lambda_{1},\lambda_{2};\lambda_{3};z;\rho)&=\frac{2}{B(\lambda_{2},\lambda_{3}-\lambda_{2})}\int_{0}^{\frac{\pi}{2}} \sin^{2\lambda_{2}-1}(\theta) \cos^{2\lambda_{3}-2\lambda_{2}-1}(\theta)\left( 1-z \sin^{2}(\theta)\right) ^{-\lambda_{1}}
		\\
		&~~~\times{}_{p}^{\alpha}\!M^{\beta}_{q}\left(\kappa_{1},\ldots,\kappa_{p};\mu_{1},\ldots,\mu_{q};\frac{-\rho}{ \sin^{2}(\theta) \cos^{2}(\theta)}\right)d\theta.\qedhere
	\end{align*} 
\end{proof}

\begin{theorem}
The integral representations of the M-confluent hypergeometric function is obtained as: 	
	\begin{align*}
		{}^{M}\!\Phi_{p,q}^{(\alpha,\beta)}(\lambda_{2};\lambda_{3};z;\rho)&=\frac{1}{B(\lambda_{2},\lambda_{3}-\lambda_{2})}\int_{0}^{1}t^{\lambda_{2}-1}(1-t)^{\lambda_{3}-\lambda_{2}-1}\exp(zt)
		\\
		&\quad\times{}_{p}^{\alpha}\!M^{\beta}_{q}\left(\kappa_{1},\ldots,\kappa_{p};\mu_{1},\ldots,\mu_{q};\frac{-\rho}{t(1-t)}\right)dt,
		\\
		{}^{M}\!\Phi_{p,q}^{(\alpha,\beta)}(\lambda_{2};\lambda_{3};z;\rho)&=\frac{1}{B(\lambda_{2},\lambda_{3}-\lambda_{2})}\int_{0}^{1}u^{\lambda_{3}-\lambda_{2}-1}(1-u)^{\lambda_{2}-1}\exp\big(z(1-u)\big)
		\\
		&\quad\times{}_{p}^{\alpha}\!M^{\beta}_{q}\left(\kappa_{1},\ldots,\kappa_{p};\mu_{1},\ldots,\mu_{q};\frac{-\rho}{u(1-u)}\right)du.
	\end{align*}
\end{theorem}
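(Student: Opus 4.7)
The plan is to mimic the approach used for the M-Gauss hypergeometric function: start from the series definition, insert the integral representation of the M-beta function, and recognize the resulting power series in $z$ as an exponential.

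First, I would write
\begin{align*}
{}^{M}\!\Phi_{p,q}^{(\alpha,\beta)}(\lambda_{2};\lambda_{3};z;\rho)=\sum_{n=0}^{\infty}\frac{{}^{M}\!B_{p,q}^{(\alpha,\beta)}(\lambda_{2}+n,\lambda_{3}-\lambda_{2};\rho)}{B(\lambda_{2},\lambda_{3}-\lambda_{2})}\frac{z^{n}}{n!}
\end{align*}
directly from the definition, then substitute the integral representation
\begin{align*}
{}^{M}\!B_{p,q}^{(\alpha,\beta)}(\lambda_{2}+n,\lambda_{3}-\lambda_{2};\rho)=\int_{0}^{1}t^{\lambda_{2}+n-1}(1-t)^{\lambda_{3}-\lambda_{2}-1}\,{}_{p}^{\alpha}\!M^{\beta}_{q}\!\left(\tfrac{-\rho}{t(1-t)}\right)dt
\end{align*}
coming from \eqref{bt1}. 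Next I would interchange summation and integration (justified by absolute convergence under the running hypotheses) and pull the $n$-dependence together to obtain the factor $\sum_{n=0}^{\infty}(zt)^{n}/n!=\exp(zt)$. This immediately produces the first integral representation.

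For the second representation, I would simply apply the change of variables $u=1-t$ (equivalently $t=1-u$, $dt=-du$) in the first integral. Under this substitution, $t^{\lambda_{2}-1}$ becomes $(1-u)^{\lambda_{2}-1}$, the factor $(1-t)^{\lambda_{3}-\lambda_{2}-1}$ becomes $u^{\lambda_{3}-\lambda_{2}-1}$, the argument $t(1-t)$ is symmetric so the M-series argument becomes $-\rho/(u(1-u))$, and $\exp(zt)$ becomes $\exp(z(1-u))$. The limits swap and absorb the minus sign from $du$, giving the stated form.

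The only point requiring any care is the interchange of sum and integral in the first step; this is the main (and mild) obstacle, and it is handled by the standard dominated convergence argument that already underlies the parallel computation for ${}^{M}\!F_{p,q}^{(\alpha,\beta)}$ in the previous theorem. Everything else is a routine substitution, so I would keep the write-up short and parallel to the Gauss hypergeometric case.
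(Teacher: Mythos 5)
Your proposal is correct and matches the route the paper intends: the paper's proof is simply ``with similar calculations, desired results are obtained,'' referring to the preceding M-Gauss hypergeometric theorem, where the series definition is combined with the integral form \eqref{bt1} of the M-beta function and the resulting power series in $z$ is resummed (here to $\exp(zt)$ instead of $(1-zt)^{-\lambda_1}$), followed by the substitution $t\to 1-u$ for the second representation. Indeed, the paper itself carries out exactly these two steps explicitly later, in the proof of the transformation formula ${}^{M}\!\Phi_{p,q}^{(\alpha,\beta)}(\lambda_{2};\lambda_{3};z;\rho)=\exp(z)\,{}^{M}\!\Phi_{p,q}^{(\alpha,\beta)}(\lambda_{3}-\lambda_{2};\lambda_{3};-z;\rho)$, so your write-up is a faithful expansion of the argument the author had in mind.
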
 

\begin{proof}
	With similar calculations, desired results are obtained.
\end{proof}
Beta function and Pochhammer symbol equations are provided, respectively:
\begin{align*}
	B(\lambda_{2},\lambda_{3}-\lambda_{2})&=\frac{\lambda_{3}}{\lambda_{2}}B(\lambda_{2}+1,\lambda_{3}-\lambda_{2}),
	\\
	(\lambda_{1})_{n+1}&=\lambda_{1}(\lambda_{1}+1)_{n}.
\end{align*}
These equations are used in proof of two theorems given below.
\begin{theorem}
The $n$-order derivative of the M-Gauss hypergeometric function is obtained as:
	\begin{align*}
		\frac{d^{n}}{dz^{n}}\left\lbrace {}^{M}\!F_{p,q}^{(\alpha,\beta)}(\lambda_{1},\lambda_{2};\lambda_{3};z;\rho)\right\rbrace =\frac{(\lambda_1)_{n}(\lambda_2)_{n}}{(\lambda_3)_{n}}\left( {}^{M}\!F_{p,q}^{(\alpha,\beta)}(\lambda_{1}+n,\lambda_{2}+n;\lambda_{3}+n;z;\rho)\right).
	\end{align*}
\end{theorem}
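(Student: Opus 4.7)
The plan is to work directly from the series definition of ${}^{M}\!F_{p,q}^{(\alpha,\beta)}$ and prove the formula by induction on $n$, with the $n=1$ case doing all the real work and the induction step being purely formal. The two identities the authors have just displayed, namely $B(\lambda_{2},\lambda_{3}-\lambda_{2})=\tfrac{\lambda_{3}}{\lambda_{2}}B(\lambda_{2}+1,\lambda_{3}-\lambda_{2})$ and $(\lambda_{1})_{n+1}=\lambda_{1}(\lambda_{1}+1)_{n}$, are exactly the tools that will be needed.

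For the base case $n=1$, I would differentiate termwise (justified by the fact that the defining series is a power series in $z$):
\begin{align*}
\frac{d}{dz}\,{}^{M}\!F_{p,q}^{(\alpha,\beta)}(\lambda_{1},\lambda_{2};\lambda_{3};z;\rho)
=\sum_{k=1}^{\infty}(\lambda_{1})_{k}\frac{{}^{M}\!B_{p,q}^{(\alpha,\beta)}(\lambda_{2}+k,\lambda_{3}-\lambda_{2};\rho)}{B(\lambda_{2},\lambda_{3}-\lambda_{2})}\frac{z^{k-1}}{(k-1)!}.
\end{align*}
Shifting the index $m=k-1$ and applying $(\lambda_{1})_{m+1}=\lambda_{1}(\lambda_{1}+1)_{m}$ pulls out a factor of $\lambda_{1}$. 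The crucial observation is that $\lambda_{3}-\lambda_{2}=(\lambda_{3}+1)-(\lambda_{2}+1)$, so ${}^{M}\!B_{p,q}^{(\alpha,\beta)}(\lambda_{2}+m+1,\lambda_{3}-\lambda_{2};\rho)$ is exactly the M-beta factor that appears inside ${}^{M}\!F_{p,q}^{(\alpha,\beta)}(\lambda_{1}+1,\lambda_{2}+1;\lambda_{3}+1;z;\rho)$. To convert the denominator, I would use the beta identity in the form $B(\lambda_{2}+1,\lambda_{3}-\lambda_{2})=\tfrac{\lambda_{2}}{\lambda_{3}}B(\lambda_{2},\lambda_{3}-\lambda_{2})$, producing the prefactor $\tfrac{\lambda_{1}\lambda_{2}}{\lambda_{3}}$. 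This is precisely the $n=1$ instance since $(\lambda_{1})_{1}(\lambda_{2})_{1}/(\lambda_{3})_{1}=\lambda_{1}\lambda_{2}/\lambda_{3}$.

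For the inductive step, assuming the formula at order $n$, I would differentiate once more and apply the $n=1$ identity to ${}^{M}\!F_{p,q}^{(\alpha,\beta)}(\lambda_{1}+n,\lambda_{2}+n;\lambda_{3}+n;z;\rho)$, yielding an extra factor $\tfrac{(\lambda_{1}+n)(\lambda_{2}+n)}{\lambda_{3}+n}$ and advancing each parameter by one more unit. Using $(\lambda)_{n}(\lambda+n)=(\lambda)_{n+1}$ (both numerator and denominator simultaneously), the coefficient consolidates into $\tfrac{(\lambda_{1})_{n+1}(\lambda_{2})_{n+1}}{(\lambda_{3})_{n+1}}$, closing the induction.

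The only real pitfall is bookkeeping: one must verify that the shift $\lambda_{2}\mapsto\lambda_{2}+1$, $\lambda_{3}\mapsto\lambda_{3}+1$ leaves the second M-beta argument $\lambda_{3}-\lambda_{2}$ invariant, and that the $p,q,\alpha,\beta$ and $\kappa_{i},\mu_{j}$ parameters of the underlying M-series are untouched throughout (so the M-beta factors genuinely match the series for the shifted M-Gauss function). Once that is confirmed, the result is an essentially algebraic rearrangement, and no convergence issue arises beyond that of the original defining power series.
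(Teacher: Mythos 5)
Your proposal is correct and follows essentially the same route as the paper: termwise differentiation of the defining series, an index shift, and the two displayed identities $(\lambda_{1})_{n+1}=\lambda_{1}(\lambda_{1}+1)_{n}$ and $B(\lambda_{2},\lambda_{3}-\lambda_{2})=\tfrac{\lambda_{3}}{\lambda_{2}}B(\lambda_{2}+1,\lambda_{3}-\lambda_{2})$ to produce the factor $\tfrac{\lambda_{1}\lambda_{2}}{\lambda_{3}}$ and the shifted M-Gauss function. The only difference is that you spell out the induction step that the paper compresses into the phrase ``more general form,'' which is a welcome bit of extra rigor rather than a divergence in method.
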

\begin{proof}
	Differentiating M-Gauss hypergeometric function, we have
	\begin{align*}
		\frac{d}{dz}\left\lbrace {}^{M}\!F_{p,q}^{(\alpha,\beta)}(\lambda_{1},\lambda_{2};\lambda_{3};z;\rho)\right\rbrace &=\frac{d}{dz}\left\lbrace \sum_{n=0}^{\infty}(\lambda_1)_{n}\frac{{}^{M}\!B_{p,q}^{(\alpha,\beta)}(\lambda_{2}+n,\lambda_{3}-\lambda_{2};\rho)}{B(\lambda_{2},\lambda_{3}-\lambda_{2})}\frac{z^{n}}{n!}\right\rbrace \\
		&=\sum_{n=1}^{\infty}(\lambda_1)_{n}\frac{{}^{M}\!B_{p,q}^{(\alpha,\beta)}(\lambda_{2}+n,\lambda_{3}-\lambda_{2};\rho)}{B(\lambda_{2},\lambda_{3}-\lambda_{2})}\frac{z^{n-1}}{(n-1)!}. 
	\end{align*}
	By writing $n\to n+1$, we get
	\begin{align*}
		\frac{d}{dz}\left\lbrace {}^{M}\!F_{p,q}^{(\alpha,\beta)}(\lambda_{1},\lambda_{2};\lambda_{3};z;\rho)\right\rbrace &=\frac{(\lambda_{1})(\lambda_{2})}{(\lambda_{3})}\sum_{n=0}^{\infty}(\lambda_{1}+1)_{n}\frac{{}^{M}\!B_{p,q}^{(\alpha,\beta)}(\lambda_{2}+n+1,\lambda_{3}-\lambda_{2};\rho)}{B(\lambda_{2}+1,\lambda_{3}-\lambda_{2})}\frac{z^{n}}{n!}\\
		&=\frac{(\lambda_{1})(\lambda_{2})}{(\lambda_{3})}\left( {}^{M}\!F_{p,q}^{(\alpha,\beta)}(\lambda_{1}+1,\lambda_{2}+1;\lambda_{3}+1;z;\rho)\right).
	\end{align*}
	More general form:
	\begin{equation*}
		\frac{d^{n}}{dz^{n}}\left\lbrace {}^{M}\!F_{p,q}^{(\alpha,\beta)}(\lambda_{1},\lambda_{2};\lambda_{3};z;\rho)\right\rbrace =\frac{(\lambda_1)_{n}(\lambda_2)_{n}}{(\lambda_3)_{n}}\left({}^{M}\!F_{p,q}^{(\alpha,\beta)}(\lambda_{1}+n,\lambda_{2}+n;\lambda_{3}+n;z;\rho)\right).\qedhere
	\end{equation*}
\end{proof}

\begin{theorem}
The $n$-order derivative of the M-confluent hypergeometric function is obtained as:
	\begin{align*}
		\frac{d^{n}}{dz^{n}}\left\lbrace {}^{M}\!\Phi_{p,q}^{(\alpha,\beta)}(\lambda_{2};\lambda_{3};z;\rho)\right\rbrace =\frac{(\lambda_2)_{n}}{(\lambda_3)_{n}}\left( {}^{M}\!\Phi_{p,q}^{(\alpha,\beta)}(\lambda_{2}+n;\lambda_{3}+n;z;\rho)\right).
	\end{align*}
\end{theorem}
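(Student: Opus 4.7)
The plan is to mirror the proof of the preceding derivative theorem for ${}^{M}\!F_{p,q}^{(\alpha,\beta)}$, with the $(\lambda_{1})_{n}$ factor simply absent throughout. First I would differentiate the series definition of ${}^{M}\!\Phi_{p,q}^{(\alpha,\beta)}$ term-by-term in $z$; the $n=0$ term is constant in $z$ and vanishes, so what remains, after reindexing $n\to n+1$, is a series whose general term is
$$
\frac{{}^{M}\!B_{p,q}^{(\alpha,\beta)}(\lambda_{2}+n+1,\lambda_{3}-\lambda_{2};\rho)}{B(\lambda_{2},\lambda_{3}-\lambda_{2})}\,\frac{z^{n}}{n!}.
$$
Since this is a formal power series in $z$, term-by-term differentiation inside its disk of convergence is automatic, so no analytic subtlety arises at this step.

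Next I would apply the beta-function identity $B(\lambda_{2},\lambda_{3}-\lambda_{2})=(\lambda_{3}/\lambda_{2})\,B(\lambda_{2}+1,\lambda_{3}-\lambda_{2})$ stated just before the theorem to pull out a factor $\lambda_{2}/\lambda_{3}$. The key observation is that $\lambda_{3}-\lambda_{2}=(\lambda_{3}+1)-(\lambda_{2}+1)$, so the M-beta numerator can be rewritten as ${}^{M}\!B_{p,q}^{(\alpha,\beta)}((\lambda_{2}+1)+n,(\lambda_{3}+1)-(\lambda_{2}+1);\rho)$ and the denominator as $B(\lambda_{2}+1,(\lambda_{3}+1)-(\lambda_{2}+1))$. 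The resulting series is then visibly the definition of ${}^{M}\!\Phi_{p,q}^{(\alpha,\beta)}(\lambda_{2}+1;\lambda_{3}+1;z;\rho)$, yielding the first-order formula
$$
\frac{d}{dz}\,{}^{M}\!\Phi_{p,q}^{(\alpha,\beta)}(\lambda_{2};\lambda_{3};z;\rho)=\frac{\lambda_{2}}{\lambda_{3}}\,{}^{M}\!\Phi_{p,q}^{(\alpha,\beta)}(\lambda_{2}+1;\lambda_{3}+1;z;\rho).
$$

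Finally I would iterate this one-step relation, or equivalently run an induction on $n$. Each additional differentiation shifts both $\lambda_{2}$ and $\lambda_{3}$ up by one and contributes a factor $(\lambda_{2}+k)/(\lambda_{3}+k)$ at the $k$-th stage; the telescoping product of these factors is exactly $(\lambda_{2})_{n}/(\lambda_{3})_{n}$ by the defining relation $(\lambda)_{n+1}=\lambda(\lambda+1)_{n}$ recalled above, giving the claimed identity. The main obstacle is purely bookkeeping: one must be careful that the simultaneous shift $\lambda_{2}\mapsto\lambda_{2}+1$, $\lambda_{3}\mapsto\lambda_{3}+1$ leaves the second argument of the M-beta function unchanged, which is precisely why the identity $\lambda_{3}-\lambda_{2}=(\lambda_{3}+1)-(\lambda_{2}+1)$ is the linchpin of the argument.
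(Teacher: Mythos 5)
Your proposal is correct and follows essentially the same route as the paper, which proves this theorem by "similar calculations" to the preceding derivative theorem for ${}^{M}\!F_{p,q}^{(\alpha,\beta)}$: term-by-term differentiation, the reindexing $n\to n+1$, the identity $B(\lambda_{2},\lambda_{3}-\lambda_{2})=\frac{\lambda_{3}}{\lambda_{2}}B(\lambda_{2}+1,\lambda_{3}-\lambda_{2})$, and iteration to reach $(\lambda_{2})_{n}/(\lambda_{3})_{n}$. Your explicit remark that $\lambda_{3}-\lambda_{2}=(\lambda_{3}+1)-(\lambda_{2}+1)$ keeps the second argument of the M-beta function fixed is precisely the bookkeeping the paper's template relies on.
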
 
\begin{proof}
	By making similar calculations, the desired result is achieved.
\end{proof}

\begin{theorem}
The M-Gauss hypergeometric function has the equation:
	\begin{align*}
		{}^{M}\!F_{p,q}^{(\alpha,\beta)}(\lambda_{1},\lambda_{2};\lambda_{3};z;\rho)=(1-z)^{-\lambda_{1}}\left( {}^{M}\!F_{p,q}^{(\alpha,\beta)}\left( \lambda_{1},\lambda_{3}-\lambda_{2};\lambda_{3};\frac{z}{z-1};\rho\right) \right).
	\end{align*}
\end{theorem}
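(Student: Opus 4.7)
The plan is to mimic the classical Pfaff transformation proof for ${}_{2}F_{1}$ by starting from the single-integral representation of the M-Gauss hypergeometric function established in \eqref{j1} and performing the substitution $t \mapsto 1-t$. Because the M-series argument $-\rho/\bigl(t(1-t)\bigr)$ is symmetric under $t \leftrightarrow 1-t$, the only nontrivial change is in the algebraic prefactor, which will produce exactly the $(1-z)^{-\lambda_{1}}$ and the $z/(z-1)$ that appear on the right-hand side.

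First I would write
\begin{align*}
{}^{M}\!F_{p,q}^{(\alpha,\beta)}(\lambda_{1},\lambda_{2};\lambda_{3};z;\rho)
=\frac{1}{B(\lambda_{2},\lambda_{3}-\lambda_{2})}\int_{0}^{1}t^{\lambda_{2}-1}(1-t)^{\lambda_{3}-\lambda_{2}-1}(1-zt)^{-\lambda_{1}}~{}_{p}^{\alpha}\!M^{\beta}_{q}\!\left(\cdots;\tfrac{-\rho}{t(1-t)}\right)dt
\end{align*}
using \eqref{j1}, and then set $u=1-t$. Under this substitution: $t^{\lambda_{2}-1}(1-t)^{\lambda_{3}-\lambda_{2}-1}$ becomes $u^{\lambda_{3}-\lambda_{2}-1}(1-u)^{\lambda_{2}-1}$, the factor $t(1-t)$ is invariant so the M-series kernel is unchanged, and the binomial-type factor transforms as
\begin{align*}
(1-zt)^{-\lambda_{1}}=\bigl(1-z(1-u)\bigr)^{-\lambda_{1}}=(1-z)^{-\lambda_{1}}\left(1-\tfrac{z}{z-1}\,u\right)^{-\lambda_{1}},
\end{align*}
which is the key algebraic step that exposes the shifted variable $z/(z-1)$.

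Next I would pull $(1-z)^{-\lambda_{1}}$ out of the integral and use the symmetry $B(\lambda_{2},\lambda_{3}-\lambda_{2})=B(\lambda_{3}-\lambda_{2},\lambda_{2})$ to rewrite the prefactor in the form appropriate for parameters $(\lambda_{1},\lambda_{3}-\lambda_{2};\lambda_{3})$. Recognizing the resulting expression as precisely the integral representation \eqref{j1} evaluated at these new parameters and at argument $z/(z-1)$ gives
\begin{align*}
{}^{M}\!F_{p,q}^{(\alpha,\beta)}(\lambda_{1},\lambda_{2};\lambda_{3};z;\rho)
=(1-z)^{-\lambda_{1}}~{}^{M}\!F_{p,q}^{(\alpha,\beta)}\!\left(\lambda_{1},\lambda_{3}-\lambda_{2};\lambda_{3};\tfrac{z}{z-1};\rho\right),
\end{align*}
which is the stated identity.

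There is no real obstacle here; the identity is forced once one notices that the M-series kernel depends on $t$ only through the symmetric combination $t(1-t)$, so the whole proof is essentially the $t \mapsto 1-t$ symmetry of the classical Euler integral transferred verbatim to the modified setting. The only point that warrants care is the rewriting of $(1-z(1-u))$ as $(1-z)(1-\tfrac{z}{z-1}u)$, which must be done exactly as above so that the extracted constant matches the factor $(1-z)^{-\lambda_{1}}$ and so that the remaining integrand has the correct form to invoke \eqref{j1} with the parameter $\lambda_{2}$ replaced by $\lambda_{3}-\lambda_{2}$.
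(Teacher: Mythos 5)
Your proposal is correct and follows essentially the same route as the paper's own proof: the substitution $t\mapsto 1-t$ in the integral representation \eqref{j1}, the invariance of the kernel argument $-\rho/\bigl(t(1-t)\bigr)$, and the factorization $\bigl(1-z(1-t)\bigr)^{-\lambda_{1}}=(1-z)^{-\lambda_{1}}\bigl(1-\tfrac{zt}{z-1}\bigr)^{-\lambda_{1}}$ are exactly the steps the paper uses. No changes needed.
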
 
\begin{proof}
	Using equation
	\begin{align*}
		\big( 1-z(1-t)\big)^{-\lambda_{1}}=(1-z)^{-\lambda_{1}}\left( 1+\frac{zt}{1-z}\right)^{-\lambda_{1}}
	\end{align*} and by writing $t\to 1-t$ in \eqref{j1}, we have	
	\begin{align*}
		{}^{M}\!&F_{p,q}^{(\alpha,\beta)}(\lambda_{1},\lambda_{2};\lambda_{3};z;\rho)
		\\
		&=\frac{(1-z)^{-\lambda_{1}}}{B(\lambda_{2},\lambda_{3}-\lambda_{2})}\int_{0}^{1}t^{\lambda_{3}-\lambda_{2}-1}(1-t)^{\lambda_{2}-1}\left( 1-\frac{zt}{z-1}\right)^{-\lambda_{1}}{}_{p}^{\alpha}\!M^{\beta}_{q}\left(\kappa_{1},\ldots,\kappa_{p};\mu_{1},\ldots,\mu_{q};\frac{-\rho}{t(1-t)}\right) dt
		\\
		&=(1-z)^{-\lambda_{1}}\left( {}^{M}\!F_{p}^{(\alpha,\beta)}\left( \lambda_{1},\lambda_{3}-\lambda_{2};\lambda_{3};\frac{z}{z-1}\right) \right).\qedhere
	\end{align*}
\end{proof}

\begin{theorem}
The M-confluent hypergeometric function has the equation:
	\begin{align*}
		{}^{M}\!\Phi_{p,q}^{(\alpha,\beta)}(\lambda_{2};\lambda_{3};z;\rho)=\exp(z)\left( {}^{M}\!\Phi_{p,q}^{(\alpha,\beta)}(\lambda_{3}-\lambda_{2};\lambda_{3};-z;\rho)\right).
	\end{align*}
\end{theorem}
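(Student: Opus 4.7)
The plan is to mimic the classical derivation of Kummer's first transformation $\Phi(\lambda_{2};\lambda_{3};z)=\exp(z)\,\Phi(\lambda_{3}-\lambda_{2};\lambda_{3};-z)$, working directly from the integral representation of the M-confluent hypergeometric function established in the preceding theorem. Namely, I would start from
$${}^{M}\!\Phi_{p,q}^{(\alpha,\beta)}(\lambda_{2};\lambda_{3};z;\rho)=\frac{1}{B(\lambda_{2},\lambda_{3}-\lambda_{2})}\int_{0}^{1}t^{\lambda_{2}-1}(1-t)^{\lambda_{3}-\lambda_{2}-1}\exp(zt)\,{}_{p}^{\alpha}\!M^{\beta}_{q}\!\left(\kappa_{1},\ldots,\kappa_{p};\mu_{1},\ldots,\mu_{q};\frac{-\rho}{t(1-t)}\right)dt$$
and apply the substitution $t=1-u$.

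Under this change of variable, the interval $(0,1)$ maps to itself, the weight $t^{\lambda_{2}-1}(1-t)^{\lambda_{3}-\lambda_{2}-1}$ becomes $(1-u)^{\lambda_{2}-1}u^{\lambda_{3}-\lambda_{2}-1}$, and the exponential factors as $\exp(zt)=\exp(z)\exp(-zu)$. The M-series argument $-\rho/(t(1-t))$ is invariant because $t(1-t)=u(1-u)$, so the entire M-series kernel transfers to the new variable unchanged. Pulling $\exp(z)$ outside the integral and invoking the symmetry $B(\lambda_{2},\lambda_{3}-\lambda_{2})=B(\lambda_{3}-\lambda_{2},\lambda_{2})$ for the normalizing constant puts the expression into the form
$$\exp(z)\cdot\frac{1}{B(\lambda_{3}-\lambda_{2},\lambda_{2})}\int_{0}^{1}u^{\lambda_{3}-\lambda_{2}-1}(1-u)^{\lambda_{2}-1}\exp(-zu)\,{}_{p}^{\alpha}\!M^{\beta}_{q}\!\left(\kappa_{1},\ldots,\kappa_{p};\mu_{1},\ldots,\mu_{q};\frac{-\rho}{u(1-u)}\right)du,$$
which is precisely the first integral representation from the preceding integral-representations theorem, evaluated at parameters $(\lambda_{3}-\lambda_{2};\lambda_{3};-z;\rho)$.

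There is no genuine obstacle here: the whole argument amounts to a single change of variables combined with the reflective symmetry of the beta function. The only point worth a bit of attention is to track the swap $\lambda_{2}\leftrightarrow\lambda_{3}-\lambda_{2}$ in both the polynomial exponents and the normalizing beta constant simultaneously, and to verify that the resulting integral is exactly the first integral representation for ${}^{M}\!\Phi_{p,q}^{(\alpha,\beta)}(\lambda_{3}-\lambda_{2};\lambda_{3};-z;\rho)$ up to this swap. The invariance of the M-series argument under $t\mapsto 1-u$ is the structural feature that makes the classical Kummer identity lift verbatim to this modified setting.
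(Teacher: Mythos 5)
Your proposal is correct and follows essentially the same route as the paper: the paper likewise writes out the integral representation of ${}^{M}\!\Phi_{p,q}^{(\alpha,\beta)}(\lambda_{2};\lambda_{3};z;\rho)$ and performs the substitution $t\to 1-t$, using the invariance of $t(1-t)$ and the symmetry of the beta function to land on $\exp(z)\,{}^{M}\!\Phi_{p,q}^{(\alpha,\beta)}(\lambda_{3}-\lambda_{2};\lambda_{3};-z;\rho)$. Your write-up just makes explicit the bookkeeping the paper leaves implicit.
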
 
\begin{proof}
Using definition of M-confluent hypergeometric function, we have	
	\begin{align*}
		{}^{M}\!\Phi_{p,q}^{(\alpha,\beta)}&(\lambda_{2};\lambda_{3};z;\rho)
		\\
		&=\sum_{n=0}^{\infty}\frac{{}^{M}\!B_{p,q}^{(\alpha,\beta)}(\lambda_{2}+n,\lambda_{3}-\lambda_{2};\rho)}{B(\lambda_{2},\lambda_{3}-\lambda_{2})}\frac{z^{n}}{n!}\nonumber\\
		&=\frac{1}{B(\lambda_{2},\lambda_{3}-\lambda_{2})}\int_{0}^{1}t^{\lambda_{2}-1}(1-t)^{\lambda_{3}-\lambda_{2}-1}\exp(zt)~{}_{p}^{\alpha}\!M^{\beta}_{q}\left(\kappa_{1},\ldots,\kappa_{p};\mu_{1},\ldots,\mu_{q};\frac{-\rho}{t(1-t)}\right)dt.
	\end{align*}	
	By writing $t\to 1-t$ in above equation, we get
	\begin{equation*}
		{}^{M}\!\Phi_{p,q}^{(\alpha,\beta)}(\lambda_{2};\lambda_{3};z;\rho)=\exp(z)\left( {}^{M}\!\Phi_{p,q}^{(\alpha,\beta)}(\lambda_{3}-\lambda_{2};\lambda_{3};-z;\rho)\right).\qedhere
	\end{equation*}
\end{proof}

\section{Conclusions}
Most of the generalized gamma, beta, Gauss hypergeometric and confluent hypergeometric functions in the literature have been observed to be special cases of the M-gamma, M-beta, M-Gauss hypergeometric and M-confluent hypergeometric functions introduced in this paper, such that:
\\
Abubakar \cite{abubakar},
\begin{align*}
	{}^{M}\!B_{p,q}^{(1,1)}(x,y;\rho)&=\frac{\Gamma(\mu_{1})\ldots\Gamma(\mu_{q})}{\Gamma(\kappa_{1})\ldots\Gamma(\kappa_{p})}{}^{\Psi}\!B_{\rho}^{1,1}
	\left[\!\!\begin{array}{c}
		(\kappa_{i},1)_{1,p} \\
		(\mu_{j},1)_{1,q}  
	\end{array} \Big| x,y \right],
	\\
	{}^{M}\!F_{p,q}^{(1,1)}(\lambda_{1},\lambda_{2};\lambda_{3};z;\rho)&=\frac{\Gamma(\mu_{1})\ldots\Gamma(\mu_{q})}{\Gamma(\kappa_{1})\ldots\Gamma(\kappa_{p})}{}^{\Psi}\!F_{\rho}^{1,1}
	\left[\!\!\begin{array}{c}
		(\kappa_{i},1)_{1,p} \\
		(\mu_{j},1)_{1,q}  
	\end{array} \Big| \lambda_{1},\lambda_{2};\lambda_{3};z \right],
	\\
	{}^{M}\!\Phi_{p,q}^{(1,1)}(\lambda_{2};\lambda_{3};z;\rho)&=\frac{\Gamma(\mu_{1})\ldots\Gamma(\mu_{q})}{\Gamma(\kappa_{1})\ldots\Gamma(\kappa_{p})}{}^{\Psi}\!\Phi_{\rho}^{1,1}
	\left[\!\!\begin{array}{c}
		(\kappa_{i},1)_{1,p} \\
		(\mu_{j},1)_{1,q}  
	\end{array} \Big| \lambda_{2};\lambda_{3};z \right].
\end{align*}
Classic functions \cite{andrews,kilbas},
\begin{align*}
	{}^{M}\Gamma_{1,1}^{(1,1)}(1;1;x;0)&=\Gamma(x),
	\\
	{}^{M}\!B_{1,1}^{(1,1)}(1;1;x,y;0)&=B(x,y),
	\\
	{}^{M}\!F_{1,1}^{(1,1)}(1;1;\lambda_{1},\lambda_{2};\lambda_{3};z;0)&={}_{2}F_{1}(\lambda_{1},\lambda_{2};\lambda_{3};z),
	\\
	{}^{M}\!\Phi_{1,1}^{(1,1)}(1;1;\lambda_{2};\lambda_{3};z;0)&=\Phi(\lambda_{2};\lambda_{3};z).
\end{align*}
Ata and K\i ymaz \cite{ata20},
\begin{align*}
	{}^{M}\Gamma_{p,q}^{(1,1)}(x;\rho)&=\frac{\Gamma(\mu_{1})\ldots\Gamma(\mu_{q})}{\Gamma(\kappa_{1})\ldots\Gamma(\kappa_{p})}{}^{\Psi}\Gamma_{\rho}
	\left[\!\!\begin{array}{c}
		(\kappa_{i},1)_{1,p} \\
		(\mu_{j},1)_{1,q}  
	\end{array} \Big| x \right],
	\\
	{}^{M}\!B_{p,q}^{(1,1)}(x,y;\rho)&=\frac{\Gamma(\mu_{1})\ldots\Gamma(\mu_{q})}{\Gamma(\kappa_{1})\ldots\Gamma(\kappa_{p})}{}^{\Psi}\!B_{\rho}
	\left[\!\!\begin{array}{c}
		(\kappa_{i},1)_{1,p} \\
		(\mu_{j},1)_{1,q}  
	\end{array} \Big| x,y \right],
	\\
	{}^{M}\!F_{p,q}^{(1,1)}(\lambda_{1},\lambda_{2};\lambda_{3};z;\rho)&=\frac{\Gamma(\mu_{1})\ldots\Gamma(\mu_{q})}{\Gamma(\kappa_{1})\ldots\Gamma(\kappa_{p})}{}^{\Psi}\!F_{\rho}
	\left[\!\!\begin{array}{c}
		(\kappa_{i},1)_{1,p} \\
		(\mu_{j},1)_{1,q}  
	\end{array} \Big| \lambda_{1},\lambda_{2};\lambda_{3};z \right],
	\\
	{}^{M}\!\Phi_{p,q}^{(1,1)}(\lambda_{2};\lambda_{3};z;\rho)&=\frac{\Gamma(\mu_{1})\ldots\Gamma(\mu_{q})}{\Gamma(\kappa_{1})\ldots\Gamma(\kappa_{p})}{}^{\Psi}\!\Phi_{\rho}
	\left[\!\!\begin{array}{c}
		(\kappa_{i},1)_{1,p} \\
		(\mu_{j},1)_{1,q}  
	\end{array} \Big| \lambda_{2};\lambda_{3};z \right].
\end{align*}
Chaudhry et al. \cite{chaudhry94,chaudhry97,chaudhry2004},
\begin{align*}
	{}^{M}\Gamma_{1,1}^{(1,1)}(1;1;x;\rho)&=\Gamma_{\rho}(x),
	\\
	{}^{M}\!B_{1,1}^{(1,1)}(1;1;x,y;\rho)&=B_{\rho}(x,y),
	\\
	{}^{M}\!F_{1,1}^{(1,1)}(1;1;\lambda_{1},\lambda_{2};\lambda_{3};z;\rho)&=F_{\rho}(\lambda_{1},\lambda_{2};\lambda_{3};z),
	\\
	{}^{M}\!\Phi_{1,1}^{(1,1)}(1;1;\lambda_{2};\lambda_{3};z;\rho)&=\Phi_{\rho}(\lambda_{2};\lambda_{3};z).
\end{align*}
Kulip et al. \cite{kulip},
\begin{align*}
	{}^{M}\Gamma_{1,2}^{(\alpha,\beta)}(\gamma;\delta,1;x;\rho)&={}^{W}\Gamma_{\rho}^{(\alpha,\beta;\gamma,\delta)}(x),
	\\
	{}^{M}\!B_{1,2}^{(\alpha,\beta)}(\gamma;\delta,1;x,y;\rho)&={}^{W}\!B_{\rho}^{(\alpha,\beta;\gamma,\delta)}(x,y).
\end{align*}
Lee et al. \cite{lee},
\begin{align*}
	{}^{M}\!B_{1,1}^{(1,1)}(1;1;x,y;\rho)&=B(x,y;\rho;1),
	\\
	{}^{M}\!F_{1,1}^{(1,1)}(1;1;\lambda_{1},\lambda_{2};\lambda_{3};z;\rho)&=F_{\rho}(\lambda_{1},\lambda_{2};\lambda_{3};z;1),
	\\
	{}^{M}\!\Phi_{1,1}^{(1,1)}(1;1;\lambda_{2};\lambda_{3};z;\rho)&=\Phi_{\rho}(\lambda_{2};\lambda_{3};z;1).
\end{align*}
Özergin et al. \cite{ozergin},
\begin{align*}
	{}^{M}\Gamma_{1,1}^{(1,1)}(\alpha;\beta;x;\rho)&=\Gamma_{\rho}^{(\alpha,\beta)}(x),
	\\
	{}^{M}\!B_{1,1}^{(1,1)}(\alpha;\beta;x,y;\rho)&=B_{\rho}^{(\alpha,\beta)}(x,y),
	\\
	{}^{M}\!F_{1,1}^{(1,1)}(\alpha;\beta;\lambda_{1},\lambda_{2};\lambda_{3};z;\rho)&=F_{\rho}^{(\alpha,\beta)}(\lambda_{1},\lambda_{2};\lambda_{3};z),
	\\
	{}^{M}\!\Phi_{1,1}^{(1,1)}(\alpha;\beta;\lambda_{2};\lambda_{3};z;\rho)&=\Phi_{\rho}^{(\alpha,\beta)}(\lambda_{2};\lambda_{3};z).
\end{align*}
Parmar \cite{parmar},
\begin{align*}
	{}^{M}\Gamma_{1,1}^{(1,1)}(\alpha;\beta;x;\rho)&=\Gamma_{\rho}^{(\alpha,\beta;1)}(x),
	\\
	{}^{M}\!B_{1,1}^{(1,1)}(\alpha;\beta;x,y;\rho)&=B_{\rho}^{(\alpha,\beta;1)}(x,y),
	\\
	{}^{M}\!F_{1,1}^{(1,1)}(\alpha;\beta;\lambda_{1},\lambda_{2};\lambda_{3};z;\rho)&=F_{\rho}^{(\alpha,\beta;1)}(\lambda_{1},\lambda_{2};\lambda_{3};z),
	\\
	{}^{M}\!\Phi_{1,1}^{(1,1)}(\alpha;\beta;\lambda_{2};\lambda_{3};z;\rho)&=\Phi_{\rho}^{(\alpha,\beta;1)}(\lambda_{2};\lambda_{3};z).
\end{align*}
Rahman et al. \cite{rahman},
\begin{align*}
	{}^{M}\!B_{1,1}^{(\alpha,1)}(1;1;x,y;\rho)&=B_{\rho}^{\alpha;1}(x,y),
	\\
	{}^{M}\!F_{1,1}^{(\alpha,1)}(1;1;\lambda_{1},\lambda_{2};\lambda_{3};z;\rho)&=F_{\rho}^{\alpha;1}(\lambda_{1},\lambda_{2};\lambda_{3};z),
	\\
	{}^{M}\!\Phi_{1,1}^{(\alpha,1)}(1;1;\lambda_{2};\lambda_{3};z;\rho)&=\Phi_{\rho}^{\alpha;1}(\lambda_{2};\lambda_{3};z).
\end{align*}
Sadab et al. \cite{shadab},
\begin{align*}
	{}^{M}\!B_{1,1}^{(\alpha,1)}(1;1;x,y;\rho)&=B_{\alpha}^{\rho}(x,y),
	\\
	{}^{M}\!F_{1,1}^{(\alpha,1)}(1;1;\lambda_{1},\lambda_{2};\lambda_{3};z;\rho)&=F_{\rho,\alpha}(\lambda_{1},\lambda_{2};\lambda_{3};z),
	\\
	{}^{M}\!\Phi_{1,1}^{(\alpha,1)}(1;1;\lambda_{2};\lambda_{3};z;\rho)&=\Phi_{\rho,\alpha}(\lambda_{2};\lambda_{3};z).
\end{align*}
Srivastava et al. \cite{srivastava},
\begin{align*}
	{}^{M}\!B_{1,1}^{(1,1)}(\alpha;\beta;x,y;\rho)&=B_{\rho}^{(\alpha,\beta;1,1)}(x,y),
	\\
	{}^{M}\!F_{1,1}^{(1,1)}(\alpha;\beta;\lambda_{1},\lambda_{2};\lambda_{3};z;\rho)&=F_{\rho}^{(\alpha,\beta;1,1)}(\lambda_{1},\lambda_{2};\lambda_{3};z).
\end{align*}

\end{document}